\newtheorem{ththm}{Theorem}
\definecolor{dullmagenta}{rgb}{0.4,0,0.4}   % #660066
\definecolor{darkblue}{rgb}{0,0,0.4}
\newtheorem{theorem}{Theorem}[section]
\newtheorem{proposition}[theorem]{Proposition}
\newtheorem{cor}[theorem]{Corollary}
\def\operatorname#1{\mathop{\operator@font #1}\nolimits}%
\newcommand{\R}{\mathbb{R}}
\newcommand{\C}{\mathbb{C}}
\newcommand{\Q}{\mathbb{Q}}
\newcommand{\Z}{\mathbb{Z}}
\newcommand{\N}{\mathbb{N}}
\newcommand{\half}{{\frac{1}{2}}}
\newcommand{\Sp}{\operatorname{Sp}}
\newcommand{\A}{\mathcal{A}}
\newcommand{\abs}[1]{\left| #1 \right|}
\newcommand{\I} {\left[\, 0\, ,1\, \right]}
\newcommand{\mcz}{{\mu}_{\textrm{CZ}}}
\newcommand{\mmcz}{{\widehat{\mu}}_{\textrm{CZ}}}
\newcommand{\wh}{\widehat}
\newcommand{\p}{\partial}
\newcommand{\pf}{\longrightarrow}
\newcommand{\CZ}{{\rm CZ}}
\renewcommand{\AA}{\mathcal{A}}
\newcommand{\GG}{\mathcal{G}}
\newcommand{\EE}{\mathcal{E}}
\newcommand{\PP}{\mathcal{P}}
\DeclarePairedDelimiter\floor{\lfloor}{\rfloor}
\numberwithin{equation}{section}
\newenvironment{proof}[1][{}]{{ \textsc{Proof{#1}:~}}}{{\hfill$\square$\\ }}
\newcommand*{\rom}[1]{\expandafter\@slowromancap\romannumeral #1@}
\def\adots{\mathinner{\mkern2mu\raise 1pt\hbox{.}\mkern 3mu\raise
4pt\hbox{.}\mkern2mu\raise 8pt\hbox{{.}}}}
\title{On the minimal number of periodic orbits on some hypersurfaces in $\R^{2n}$}
\author{Jean Gutt\footnote{This paper was written while JG was a postdoc at UC Berkeley under the supervision of Michael Hutchings.}\\
Department of Mathematics\\
University of Georgia\\
Athens, GA 30602\\
USA
\and
Jungsoo Kang\\
Mathematisches Institut\\
Westf\"alische Wilhelms-Universit\"at M\"unster\\
Einsteinstrasse 62, 48149 M\"unster\\
Germany
}
\date{ }                                           % Activate to display a given date or no date
\begin{document}
\maketitle

\begin{abstract}
	We study periodic orbits of the Reeb vector field on a nondegenerate dynamically convex starshaped hypersurface in $\R^{2n}$ along the lines of Long and Zhu \cite{LZ}, but using properties of the $S^1$- equivariant symplectic homology.
	We prove that there exist at least $n$ distinct simple periodic orbits on any nondegenerate starshaped hypersurface in $\R^{2n}$ satisfying the condition that the minimal Conley-Zehnder index is at least $n-1$.
	The condition is weaker than dynamical convexity.
\end{abstract}

%%%%%%%%%%%%%%%%%%%%%%%%%%%%%%%%%%%%%%%%%%%%%%%%%%%%%%%%%%%%%%%%%%%%%%%%%%%%%%%%%%%%%%%%%%%%%%%%%%%%%%%%%%%%%%%%%%%%%%%%%%%%%%%%%%%%%%%%%%%%%%%%
\section{Introduction}
We consider a starshaped hypersurface $\Sigma$ in $\R^{2n}$ endowed with the standard contact form $\alpha$ which is the restriction of the $1$-form $\lambda$ on $\R^{2n}$ defined by
\[
	\lambda=\half\sum_{j=1}^{n}(x^{j}dy^{j}-y^{j}dx^{j}).
\]
The \emph{Reeb vector field} $R_{\alpha}$ associated to a contact form $\alpha$ is the unique vector field on $\Sigma$ characterized by:
$\iota(R_{\alpha})d\alpha = 0$ and $\alpha (R_{\alpha}) = 1$.
Since this vector field does not vanish anywhere, there are no fixed points of its flow, and
periodic orbits are the most noticeable objects of its flow. 

The existence of a periodic orbit is known from Rabinowitz \cite{Rab79} and a long-standing question is to know the  (minimal) number of geometrically distinct periodic orbits of $(\Sigma,\alpha)$. 
This question has been studied in depth in the lowest dimensional case, in which the question is nontrivial, i.e. for a hypersurface $\Sigma\subset\R^4$ in \cite{HWZ2, HWZ3,HT, GH,GHHM, LL2,GGo}.
It turns out that, in this case, $(\Sigma,\alpha)$ carries at least two simple periodic orbits and if there are more than two simple periodic orbits, infinitely many of them are guaranteed generically.
In higher dimensions, nearly all known multiplicity results concern hypersurfaces in $\R^{2n}$ which satisfy some geometric conditions and appear in \cite{EL, blmr, EH87, LZ, WHL,Wang13}. 

This paper is based on the approach due to Long and Zhu \cite{LZ}.
They prove a certain lower bound on the number of simple periodic orbits on a strictly convex hypersurface.
In particular, they show that this lower bound equals  $n$ if the hypersurface is strictly convex and nondegenerate\footnote{A hypersurface is nondegenerate if all the periodic orbits are nondegenerate, i.e. 1 is not an eigenvalue of the linearized Poincar\'e return map; see Section \ref{index}}.
In their proof,  strict convexity plays a role twice.
First they use the fact that the index of periodic orbits behaves very well under iteration in the strictly convex case.
We show here that this remains true under the more general assumption of dynamical convexity.
Recall that $(\Sigma,\alpha)$ is {\em dynamically convex} if every periodic orbit has Conley-Zehnder index at least $n+1$; this is the case whenever $\Sigma$ is strictly convex.
Secondly, they use a result of \cite{EH87} to get information about the  interval where the indices of periodic orbits of $(\Sigma,\alpha)$ sit.
For this they use the Clarke dual action functional, which exists only when $\Sigma$ is strictly convex.
By using the positive $S^1$-equivariant symplectic homology instead, we observe that the idea of \cite{LZ} works under a weaker assumption and proves a stronger statement.
We now state the results proven in this paper.

A simple periodic orbit is called {\em even} if the Conley-Zehnder indices of all its iterates have the same parity; or, equivalently, if the linearized Poincar\'e return map has a number of real negative eigenvalues which is a multiple of $4$.
\begin{theorem}\label{thm of Long and Zhu}
	If a starshaped hypersurface $(\Sigma,\alpha)$ in $\R^{2n}$ is nondegenerate and dynamically convex, there are at least $n$ even simple periodic orbits.
	Moreover if there are precisely n simple periodic orbits, all periodic orbits have different indices.
\end{theorem}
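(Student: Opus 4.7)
The plan is to follow the Long--Zhu outline \cite{LZ}, replacing the two uses of strict convexity by (i) a version of Long's index iteration theory valid under dynamical convexity, and (ii) the positive $S^1$-equivariant symplectic homology of the ball $B^{2n}$ in place of the Ekeland--Hofer theorem obtained from the Clarke dual action functional.

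For (i), I would audit Long's iteration theory and the common index jump theorem, checking that everything used in \cite{LZ} depends on strict convexity only through the bound $\mcz(\gamma)\geq n+1$ on simple orbits, which is precisely dynamical convexity. Under this hypothesis the mean index satisfies $\mmcz(\gamma)\geq n+1>0$, so $\mcz(\gamma^k)\to\infty$ with error bounded uniformly by $n-1$, and for any finite collection $\gamma_1,\dots,\gamma_p$ of simple orbits one can find iterates $m_1,\dots,m_p$ and a large integer $N$ such that all the indices $\mcz(\gamma_i^{m_i})$ lie in a window of length $2n-2$ around $N$, together with explicit formulas for $\mcz(\gamma_i^{m_i\pm j})$ for small $j$ determined by the symplectic normal form of the linearised Poincar\'e return map.

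For (ii), I would use that $\SH^{S^1,+}_*(B^{2n})$ is rank one in each of infinitely many regularly spaced degrees (one generator in each of $n+1,\,n+3,\,n+5,\dots$ in standard conventions) and vanishes elsewhere. In the nondegenerate case this homology is computed by a chain complex whose generators are the good iterates of simple Reeb orbits, graded essentially by $\mcz$. Nontriviality of the homology in infinitely many specific degrees forces good orbit generators in each of those degrees, playing the role of the Ekeland--Hofer orbits of prescribed index in \cite{LZ}.

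The heart of the proof is then a combinatorial accounting. Assume for contradiction that fewer than $n$ simple periodic orbits are even. Apply the common index jump theorem to the full collection of simple orbits, clustering their iterates' indices tightly around $N+n-1$; an even simple orbit contributes good generators of a single parity, while a non-even simple orbit contributes good generators of both parities but never covers all the required degrees by itself. Combined with the rank-one constraint on $\SH^{S^1,+}_*(B^{2n})$ in each populated degree, and with nondegeneracy plus the tight index window (which rule out cancellation by a differential), this forces at least $n$ even simple orbits. The ``moreover'' clause follows by sharpening the same count: with $p=n$, two simple orbits sharing a Conley--Zehnder index would produce two generators in a single populated degree, requiring a differential to kill one, which is impossible given the index constraints in the window. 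The main obstacle is step (i): verifying that every piece of Long's iteration bookkeeping and the common index jump theorem rests solely on the $\mcz\geq n+1$ bound together with the symplectic normal form of the return map, with no hidden appeal to the Clarke dual.
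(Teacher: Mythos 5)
Your overall route is the same as the paper's: generalize the common index jump theorem so that it only needs nondegeneracy and positive mean indices, replace Ekeland--Hofer by the computation $SH^{S^1,+}_*(\Sigma,\R^{2n};\Q)\cong\Q$ in each degree $n-1+2k$, $k\ge 1$, and zero otherwise, and then count chain generators in the window $]\,2N-(n+1),2N+(n+1)\,[$ produced by the jump theorem with $M=1$. However, the combinatorial heart as you describe it contains a genuine error. By definition a good iterate has the \emph{same} index parity as its underlying simple orbit, so a non-even simple orbit never ``contributes good generators of both parities''; its wrong-parity (even) iterates are bad and simply drop out of the complex. Consequently your contradiction scheme (``fewer than $n$ even orbits'' plus both-parity bookkeeping) does not produce the evenness conclusion. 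The correct mechanism is: only simple orbits in $\mathcal{P}_{n+1}$ can carry good iterates in the degrees where the homology is nonzero (an iterate of an orbit of the other parity landing in $\mathcal{P}_{n+1}$ is bad); reduce WLOG to finitely many such orbits $\gamma_1,\dots,\gamma_k$ (the jump theorem needs a finite collection); then with $M=1$ exactly one iterate of each, namely the \emph{even} iterate $\gamma_i^{2m_i}$, has index in the window, because $\mcz(\gamma_i^{2m_i\mp 1})=2N\mp\mcz(\gamma_i)$ falls outside by $\mcz(\gamma_i)\ge n+1$ and by monotonicity of $\mcz$ under iteration. Since the homology is nonzero in each of the $n$ degrees $2N-(n-1),2N-(n-3),\dots,2N+(n-1)$, each such degree needs at least one chain generator, each simple orbit supplies at most one, so $k\ge n$; and each $\gamma_i^{2m_i}$ so used must be good, which forces the even iterates of $\gamma_i$ to share the parity of $\mcz(\gamma_i)$, i.e.\ $\gamma_i$ is even. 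Note also that no argument ``ruling out cancellation by a differential'' is needed for this lower bound: nonvanishing homology in a degree already forces a generator there.

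Your justification of the ``moreover'' clause also rests on the wrong obstruction. With precisely $n$ simple orbits, all orbits lie in $\mathcal{P}_{n+1}$, and if two periodic orbits (the statement concerns all iterates, not just simple orbits) shared an index $n-1+2k$, then since the homology in that degree has rank one, a differential would have to involve a good orbit in an adjacent degree, which has parity $n$; such an orbit cannot exist because every orbit is in $\mathcal{P}_{n+1}$ (any iterate of the $\gamma_i$'s landing in parity $n$ would be bad). The impossibility is thus a parity statement, not a consequence of ``index constraints in the window'': a killing generator could a priori sit in a degree outside any window you have pinned down, so your argument as stated does not close this case.
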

This Theorem is proved in Section \ref{results} as Theorem \ref{thm:longmult}, with the dynamical convexity assumption slightly weakened.

A diffeomorphism $f:(\Sigma,\alpha)\to(\Sigma,\alpha)$ is called an {\em (anti-)strict contactomorphism} if $f^*\alpha=\alpha$ (if $f^*\alpha=-\alpha$).
Next corollary directly follows from the fact that an (anti-)strict contactomorphism maps a periodic orbit $\gamma$ to a periodic orbit $\gamma'=f\circ\gamma$ (respectively $\gamma'(t)=f\bigl(\gamma(T-t)\bigr)$) with the same period $T$ and the same Conley-Zehnder index. 
\begin{cor}\label{cor1.2}
	Suppose that a nondegenerate starshaped hypersurface $(\Sigma,\alpha)$ in $\R^{2n}$ is dynamically convex and  possesses precisely $n$ simple periodic orbits.
	If there is a (anti-)strict contactomorphism from $(\Sigma,\alpha)$ to itself, all periodic orbits are invariant under it. 
\end{cor}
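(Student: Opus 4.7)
The plan is to combine the second assertion of Theorem \ref{thm of Long and Zhu} --- that when there are precisely $n$ simple periodic orbits all Conley--Zehnder indices are pairwise distinct --- with the index-preserving property of (anti-)strict contactomorphisms noted just before the corollary.

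First I would check that any (anti-)strict contactomorphism $f$ permutes the set of simple periodic orbits of $(\Sigma,\alpha)$. The prescription $\gamma\mapsto f\circ\gamma$ (respectively $\gamma(\cdot)\mapsto f(\gamma(T-\cdot))$) sends a $T$-periodic Reeb orbit to another $T$-periodic Reeb orbit; simplicity is preserved because $f^{-1}$ is itself an (anti-)strict contactomorphism and would transfer any putative smaller period on the image back to the original orbit, contradicting the simplicity of $\gamma$. By the hypothesis that there are exactly $n$ simple periodic orbits, $f$ induces a permutation of an $n$-element set.

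Second, as recalled before the corollary, this permutation preserves the Conley--Zehnder index of each simple orbit. Theorem \ref{thm of Long and Zhu} guarantees that these $n$ simple orbits carry $n$ pairwise distinct Conley--Zehnder indices; a label-preserving self-bijection of a finite set with distinct labels is forced to be the identity. Hence each simple orbit is mapped to itself by $f$, viewed as an unparametrized subset of $\Sigma$.

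Finally, every periodic orbit is an iterate $\gamma_i^k$ of some simple orbit $\gamma_i$ and has the same image in $\Sigma$ as $\gamma_i$, so its image is also invariant under $f$. There is no substantial obstacle: once one accepts the distinctness of Conley--Zehnder indices provided by Theorem \ref{thm of Long and Zhu} and the observation that (anti-)strict contactomorphisms preserve period and Conley--Zehnder index, the corollary reduces to the triviality that a finite set of distinct labels admits only the identity as a label-preserving self-bijection.
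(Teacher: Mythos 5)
Your proposal is correct and follows essentially the same route as the paper, which derives the corollary in one line from the fact that an (anti-)strict contactomorphism sends periodic orbits to periodic orbits of the same period and Conley--Zehnder index, combined with the second assertion of Theorem \ref{thm of Long and Zhu} that the indices are pairwise distinct when there are precisely $n$ simple orbits. Your spelled-out permutation argument (index-preserving bijection of a finite set with distinct labels is the identity, then pass to iterates) is exactly the implicit content of the paper's remark.
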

An interesting class of (anti-) strict contactomorphisms arises when the hypersurface $\Sigma$ is invariant under a symmetry of $(\R^{2n}=\C^n,\lambda)$.
For example, let $f:\C^n\to \C^n$, $(z_1,\dots,z_n)\mapsto ( e^{2q_1\pi i}z_z,\dots,e^{2q_n\pi i}z_n)$, $q_1,\dots,q_n\in\N$ or $(z_1,\dots,z_n)\mapsto(\bar z_1,\dots,\bar z_n)$ and 
assume $\Sigma$ is invariant under $f$, i.e. $f(\Sigma)=\Sigma$.
Then the corollary yields that if there are precisely n periodic orbits, all of them are symmetric (i.e. invariant under the symmetry).
In low dimensional cases, this result is proved in \cite{Wang09,LLWZ} for a particular symmetry, but without the nondegeneracy assumption.

A nondegenerate contact form $\alpha$ is called {\em perfect} if the number of good periodic nondegenerate orbits  with Conley Zehnder index $k$ is equal to the dimension of the $k$-th  positive $S^1$-equivariant symplectic homology group. The following corollary generalizes a result due to G\"urel \cite{Gurel}.
We note from Theorem \ref{thm of Long and Zhu} that if $(\Sigma,\alpha)$ is dynamically convex and has precisely $n$ periodic orbits, it is perfect by degree reason (see Section \ref{results}).
\begin{cor}\label{cor:perfect contact form}
	Suppose that a nondegenerate contact form $\alpha$ on a starshaped hypersurface $\Sigma$ in $\R^{2n}$ is perfect.
	Then there are precisely n even simple periodic orbits.
\end{cor}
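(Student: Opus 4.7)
The plan is to combine the computation of $\SH^{S^1,+}$ for the filling with Theorem~\ref{thm of Long and Zhu} and an index-jump pigeonhole argument. First, I would invoke the standard computation (Bourgeois--Oancea; see also Gutt--Hutchings) for a starshaped domain in $\R^{2n}$:
\[
\dim \SH^{S^1,+}_k = \begin{cases}1 & k \in \{n+1,n+3,n+5,\dots\}, \\ 0 & \text{otherwise.}\end{cases}
\]
Every simple periodic orbit is good, so the perfect hypothesis forces its Conley--Zehnder index to lie in $\{n+1,n+3,\dots\}$; in particular $\mcz(\gamma)\geq n+1$ for every simple orbit $\gamma$, and $(\Sigma,\alpha)$ is dynamically convex. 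Theorem~\ref{thm of Long and Zhu} then yields at least $n$ even simple periodic orbits.

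Second, I would establish the matching upper bound by contradiction. Suppose there were $n+1$ distinct even simple orbits $\gamma_1,\dots,\gamma_{n+1}$ with mean indices $\Delta_1,\dots,\Delta_{n+1}$; dynamical convexity gives $\Delta_i \geq 2$ for each $i$. By Long's common index jump theorem, for any $\varepsilon\in(0,1)$ one can find $N \in \N$ and positive integers $m_1,\dots,m_{n+1}$ satisfying $|m_i\Delta_i-2N|<\varepsilon$. Combined with the standard iteration estimate $|\mcz(\gamma_i^{m_i})-m_i\Delta_i|\leq n-1$ for nondegenerate Reeb orbits, this gives
\[
\mcz(\gamma_i^{m_i})\in[2N-n+1,\,2N+n-1], \qquad i=1,\dots,n+1.
\]
Since each $\gamma_i$ is even, every iterate $\gamma_i^{m_i}$ is good with Conley--Zehnder index of the parity of $n+1$. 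Any window of $2n-1$ consecutive integers contains at most $n$ integers of a fixed parity; by pigeonhole there exist $i\neq j$ with $\mcz(\gamma_i^{m_i})=\mcz(\gamma_j^{m_j})$, contradicting the perfect hypothesis, which forbids two distinct good orbits from sharing a Conley--Zehnder index.

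The hard part will be the second step: one needs to calibrate Long's common index jump theorem against the parity constraint coming from $\SH^{S^1,+}$, which relies on the iteration inequality being sharp enough to produce a window of length strictly less than $2n$. The even-orbit hypothesis is crucial here: a non-even simple orbit would contribute good iterates only along a sparser subsequence of its iterates, and a bare pigeonhole count in the window would not immediately force a collision.
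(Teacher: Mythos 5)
Your proposal is correct and follows essentially the same route as the paper: perfectness forces every (simple, hence good) orbit into degrees of parity $n+1$ and at least $n+1$, giving dynamical convexity and thus at least $n$ even simple orbits via Theorem \ref{thm of Long and Zhu}, while the upper bound comes from applying the common index jump machinery to $n+1$ even simple orbits and counting good orbits of index parity $n+1$ in the window $[2N-(n-1),2N+(n-1)]$ against Theorem \ref{thm:computing}. The only cosmetic difference is that you recover the window containment from the simultaneous mean-index approximation together with $\big|\mcz(\gamma^\ell)-\ell\,\mmcz(\gamma)\big|<n-1$, rather than quoting the conclusion of Theorem \ref{thmCIJ+} for the iterates $\gamma_i^{2m_i}$ directly.
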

This is proved as Corollary \ref{perfect} in Section \ref{results}.

A natural question is whether dynamical convexity is necessary for multiplicity results.
The following Theorem (proven as Theorem \ref{mult} in Section \ref{results}) is our partial answer.
\begin{theorem}\label{thmintro:mult}
	Let $(\Sigma,\alpha)$ be a nondegenerate starshaped hypersurface in $\R^{2n}$ such that every periodic orbit has Conley-Zehnder index at least $n-1$.
	Then $\Sigma$ possesses at least $n$ simple periodic orbits.
\end{theorem}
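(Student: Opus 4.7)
The plan is to argue by contradiction, adapting the strategy of Long--Zhu \cite{LZ} with two substitutions that have been prepared in the earlier part of the paper: the precise iteration formulas of Section~\ref{index}, which hold in the weaker regime $\mcz \geq n-1$ rather than only under strict convexity, and the positive $S^1$-equivariant symplectic homology of $(\Sigma,\alpha)$, which plays the role of the Clarke dual critical set used in \cite{LZ}.

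First I would record the computation that for any nondegenerate starshaped $(\Sigma,\alpha)$ in $\R^{2n}$,
$$\SH^{S^1,+}_{k}(\Sigma,\alpha) \cong \Q \quad \text{for } k = n+1,\, n+3,\, n+5,\dots,$$
and $\SH^{S^1,+}_{k} = 0$ otherwise. Under nondegeneracy, the underlying chain complex is freely generated over $\Q$ by the good closed Reeb orbits of $(\Sigma,\alpha)$, each graded by its Conley--Zehnder index, so every degree where $\SH^{S^1,+}$ is nontrivial must be hit by at least one good generator. Now suppose, for contradiction, that $(\Sigma,\alpha)$ admits only $q \leq n-1$ simple periodic Reeb orbits $\gamma_{1},\dots,\gamma_{q}$.

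Next I would apply the common index jump theorem of Long (used in \cite{LZ} and extended to the range $\mcz \geq n-1$ by the iteration estimates of Section~\ref{index}) to these $q$ orbits. This produces a large positive integer $N$ together with iterates $m_{1},\dots,m_{q}$ such that
$$\mcz(\gamma_{i}^{m_{i}}) \in [2N - n + 1,\ 2N + n - 1] \quad \text{for every } i=1,\dots,q,$$
with sharp control on the Conley--Zehnder indices of the neighbouring iterates $\gamma_{i}^{m_{i}\pm\ell}$. The window $W := [2N - n + 1,\ 2N + n - 1]$ contains exactly $n$ integers of the parity of $n+1$. Combining the iteration estimates of Section~\ref{index} with the parity behaviour of Conley--Zehnder indices under iteration and the definition of bad orbits, I would show that each simple orbit $\gamma_i$ contributes at most one good chain generator of the relevant parity to $W$; therefore the chain complex carries at most $q \leq n-1$ good generators of parity $n+1$ inside $W$.

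This yields the desired contradiction: since $\SH^{S^1,+}_{k}\cong \Q$ for each of the $n$ values $k \in W$ with $k \equiv n+1 \pmod{2}$, the chain complex must contain at least $n$ good generators of that parity in $W$. The main obstacle is precisely the counting just outlined: under the weaker hypothesis $\mcz \geq n-1$ one only has $\mmcz(\gamma_{i}) \geq 1$, so consecutive iterates of a single orbit can shift in Conley--Zehnder index by as little as $2$, matching the spacing of the degrees in $W$ that the homology demands. Ruling out the possibility that some $\gamma_i$ contributes more than one good generator of the right parity to $W$ requires both the refined iteration formulas of Section~\ref{index} and the flexibility in the choice of $N$ and $m_i$ afforded by the common index jump theorem, and this is where the bulk of the work will lie.
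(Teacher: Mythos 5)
Your overall frame (contradiction, common index jump theorem, comparison with $SH^{S^1,+}_*$) is the right one, but the step you yourself flag as "where the bulk of the work will lie" is not a technical detail to be filled in later: the counting claim as you state it is false, and the theorem cannot be obtained by the window count alone. Under the hypothesis $\mcz\geq n-1$ the index jump theorem gives $\mcz(\gamma_i^{2m_i-1})=2N-\mcz(\gamma_i)$ and $\mcz(\gamma_i^{2m_i+1})=2N+\mcz(\gamma_i)$, so for a simple orbit with $\mcz(\gamma_i)=n-1$ these two odd (hence good, of parity $n+1$) iterates land exactly on the endpoints $2N-n+1$ and $2N+n-1$ of your window $W=[2N-n+1,\,2N+n-1]$, in addition to the iterate $\gamma_i^{2m_i}$ which may lie inside. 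Thus a single simple orbit can contribute up to three good generators of parity $n+1$ to $W$, and "at most one per orbit" fails precisely in the borderline case that distinguishes this theorem from the dynamically convex one. What the window argument actually yields is only this: each orbit places at most one iterate strictly inside $]2N-n+1,\,2N+n-1[$ (monotonicity of $\mcz$ under iteration, which here is only non-strict), and that open interval contains $n-2$ degrees of parity $n+1$ in which $SH^{S^1,+}$ is nonzero; hence one gets $n-2$ simple orbits with all iterates in $\mathcal{P}_{n+1}$, not $n$.

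The paper closes the remaining gap of two orbits by a different mechanism, which your proposal does not contain. If no orbit has index $n-1$ one is in the setting of Theorem \ref{thm:longmult} and done; otherwise there is an orbit $\Gamma$ of index $n-1$, and since all indices are $\geq n-1$ the chain group in degree $n-2$ vanishes, so $\Gamma$ is a cycle in a degree where $SH^{S^1,+}_{n-1}=0$ and must be a boundary: this forces a good orbit $\delta$ of index $n$, which is geometrically new for parity reasons. One then assumes that $\gamma_1,\dots,\gamma_{n-2},\delta$ are the only simple orbits and derives a contradiction by applying the index jump theorem with $M=2$, listing all iterates with index in $[2N-n,\,2N+n]$, and splitting into two cases according to whether the even iterates of $\delta$ are good or bad; in each case the extra generator ($\delta^{2m_\delta}$, respectively $\delta^{2m_\delta+1}$) produces a homology class incompatible with Theorem \ref{thm:computing}. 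Without an argument of this kind (or some substitute handling orbits of index exactly $n-1$ and the degree-$n$ orbit they force), your proof only establishes the existence of $n-2$ simple periodic orbits.
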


We point out that every periodic geodesic flow of a Finsler $n$-sphere has at least Conley-Zehnder index $n-1$ under a certain pinching condition.
Under this pinching condition and nondegeneracy, Wang \cite[Theorem 1.2]{Wang} proved a conjecture of Anosov on the number of periodic geodesics on Finsler spheres.
The proof of Theorem \ref{thmintro:mult} can be used to give an alternative rather short proof of this result.
This will be discussed in a future paper.

It is easy to show that every nondegenerate starshaped hypersurface in $\R^{2n}$ has two periodic orbits, see for example \cite{K,Gurel}.
The following statement shows that if two periodic orbits on $(\Sigma,\alpha)$ do not satisfy a certain action-index resonance relation, there has to be a third one.
This can be thought of as a generalisation of a theorem due to Ekeland and Hofer \cite[Corollary 1]{EH87} (or see [Corollary V.3.17]\cite{E}). 

\begin{proposition}\label{thm:third periodic orbit}
	Let $(\Sigma,\alpha)$ be a nondegenerate starshaped hypersurface in $\R^{2n}$, for $n$ odd, with two simple periodic orbits $\gamma$ and $\delta$. Then $\Sigma$ carries another simple periodic orbit unless
	\begin{equation}\label{eq:action-index resonance relation}
		\frac{\A(\gamma)}{\mmcz(\gamma)}=\frac{\A(\delta)}{\mmcz(\delta)}
	\end{equation}
	where $\mmcz$ and $\AA$ stand for the mean Conley-Zehnder index and the action respectively.
\end{proposition}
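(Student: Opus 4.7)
The plan is to use positive $S^1$-equivariant symplectic homology, because it is concentrated in even degrees ($\SH^{S^1,+}_j(\Sigma)\cong\Q$ exactly for $j\in\{n+1,n+3,\ldots\}$) and --- with $n$ odd --- all the supporting degrees are in fact even. This parity rigidity is what drives the two-orbit count.

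Suppose for contradiction that $(\Sigma,\alpha)$ has only the two simple orbits $\gamma,\delta$ and that $r_\gamma:=\AA(\gamma)/\mmcz(\gamma)\ne\AA(\delta)/\mmcz(\delta)=:r_\delta$, with $r_\gamma<r_\delta$, say. The standard mean Euler characteristic calculation for $\SH^{S^1,+}$ --- obtained by truncating the chain complex at Conley--Zehnder index $\le N$, counting good iterates with signs, and letting $N\to\infty$ --- already produces an index-level resonance identity
\[
\frac{\varepsilon_\gamma w_\gamma}{\mmcz(\gamma)}+\frac{\varepsilon_\delta w_\delta}{\mmcz(\delta)}=\tfrac{1}{2},
\]
where $\varepsilon_\sigma\in\{\pm 1\}$ and $w_\sigma\in\{1,\thalf\}$ encode the parity/goodness type of the orbit $\sigma$.

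I would then run the parallel Euler-characteristic count on the \emph{action}-filtered chain complex and compare it against the Gutt--Hutchings spectral invariants $c_k(\Sigma)$. By nondegeneracy each $c_k$ is the action of a good iterate of Conley--Zehnder index $n-1+2k$; the index-iteration estimate $|\mu_{CZ}(\sigma^p)-p\mmcz(\sigma)|<n$ combined with $r_\gamma<r_\delta$ determines the asymptotic behaviour of $c_k$. Matching the chain and homology alternating ranks in the action filtration should produce a second, action-level identity
\[
\frac{\varepsilon_\gamma w_\gamma}{\AA(\gamma)}+\frac{\varepsilon_\delta w_\delta}{\AA(\delta)}=\frac{1}{2\min(r_\gamma,r_\delta)}.
\]
Substituting $\AA(\sigma)=r_\sigma\mmcz(\sigma)$ into this identity and subtracting $1/r_\gamma$ times the first identity leaves $(1/r_\delta-1/r_\gamma)\,\varepsilon_\delta w_\delta/\mmcz(\delta)=0$, forcing $r_\gamma=r_\delta$ and yielding the desired contradiction.

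The main obstacle is to establish the action-level identity rigorously. The naive chain-versus-homology matching is clouded by ``ephemeral'' classes in the action-filtered homology --- cycles at action $\le a$ that only become boundaries via chains of action $>a$. In the index-filtered case their total contribution is $O(1)$ and disappears in the mean Euler characteristic limit; for the action filtration one needs a more careful pairing analysis of how the $S^1$-equivariant differential couples iterates of consecutive Conley--Zehnder indices. It is here that the hypothesis $n$ odd --- via the clean even/odd split of supporting degrees --- should make the bookkeeping rigid enough to complete the argument.
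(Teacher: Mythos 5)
Your overall strategy (exploit the fact that $\SH^{S^1,+}_*$ has one $\Q$ in every second degree, and play an index-type count against an action-type count for the iterates of $\gamma$ and $\delta$) is in the right spirit, but the proof has a genuine gap exactly at its load-bearing step: the ``action-level identity'' $\varepsilon_\gamma w_\gamma/\AA(\gamma)+\varepsilon_\delta w_\delta/\AA(\delta)=1/(2\min(r_\gamma,r_\delta))$ is asserted, not proved, and it is not clear it can be proved by the route you sketch. Its derivation presupposes that the spectral invariants satisfy $c_k\sim(n-1+2k)\min(r_\gamma,r_\delta)$, i.e.\ that asymptotically every homology class is represented at the action of the \emph{cheaper} orbit's iterate in that degree. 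But when $r_\gamma\neq r_\delta$ the minimal-action generator in a given degree need not carry the class: it may fail to be a cycle, or be a boundary, in the action-truncated complex, and the iterates of the cheaper orbit need not even populate every supporting degree (ruling that out already requires the common index jump theorem, which you never invoke). Moreover, as you yourself note, the Euler characteristic of $SH^{S^1,+,\le T}$ differs from the count $\#\{k:c_k\le T\}$ by the number of finite bars crossing the level $T$; a priori this number can grow linearly in $T$, i.e.\ at the same order as the main terms, so it does not drop out of the limit the way the $O(1)$ error does in the index-filtered (mean Euler characteristic) computation. Saying that the hypothesis ``$n$ odd'' should make the bookkeeping rigid enough is a hope, not an argument; in fact in the paper's proof parity of $n$ enters only at one marginal point (to kill a possible contribution $\#\mcz^{-1}(0)$ in one of the two cases), not to control persistence bars.

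For comparison, the paper avoids spectral invariants and filtered-homology Euler characteristics altogether and works at the chain level: assuming $r_\gamma\neq r_\delta$, the action ordering of generators of adjacent Conley--Zehnder index forces, in all sufficiently high degrees, exact relations between the numbers of good orbits in consecutive degrees together with the vanishing of every second differential (the analogues of \eqref{eq:consecutive orbits} and \eqref{eq:vanishing of odd differentials}); the common index jump theorem (Theorem \ref{thmCIJ+}) then transports these high-degree counts into the fixed window of degrees around $0$ and shows that a finite truncation of the complex would have to be acyclic while having strictly more generators of one parity than of the other --- a contradiction. If you want to salvage your approach, you would need either to prove the asymptotics of $c_k$ and a uniform bound (sublinear in $T$) on the number of bars crossing action $T$ under the two-orbit hypothesis, or to replace the action-level identity by the kind of chain-level differential-vanishing argument the paper uses; as written, the conclusion does not follow.
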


The rest of the paper is divided into three sections.
Section \ref{index} is devoted to Long's index iteration formula and gives a proof of our slight generalisation of the common index jump Theorem due to Long and Zhu.
In Section \ref{SHS1}, we recall the properties of positive $S^1$-equivariant symplectic homology that we need.
Section \ref{results} contains the proofs of Theorem \ref{thm of Long and Zhu}, Corollary \ref{cor:perfect contact form}, and  Theorem \ref{thmintro:mult}.
Section \ref{third periodic orbit} is entirely devoted to the proof of Proposition \ref{thm:third periodic orbit}.

%---------------------------------------------------------------------------------------------------------------------------
\subsection*{Acknowledgements}
JG thanks Peter Albers for his kind and fruitful hospitality in M\"unster and acknowledges support from the BAEF.
JK is supported by DFG grant KA 4010/1-1.
%We are thankful to the anonymous referee for the many improvements suggested.

%%%%%%%%%%%%%%%%%%%%%%%%%%%%%%%%%%%%%%%%%%%%%%%%%%%%%%%%%%%%%%%%%%%%%%%%%%%%%%%%%%%%%%%%%%%%%%%%%%%%%%%%%%%%%%%%%%%%%%%%%%%%%%%%%%%%%%%%%%%%%%%%
\section{The main tools}

%---------------------------------------------------------------------------------------------------------------------------
\subsection{Index iterations}\label{index}
The Conley-Zehnder index  associates an integer to any continuous path $\psi$ defined on the  interval $[0,1]$ with values in the group $\Sp(\R^{2n-2})$ of $2(n-1)\times 2(n-1)$ symplectic matrices, starting from the identity and ending at a matrix which does  not admit $1$ as an eigenvalue. 
This index is used, for instance, in the definition of the grading of Floer homology theories.
If the path $\psi$ were a loop with values in the unitary group, one could define an integer by looking at the degree of the loop in the circle defined by the (complex) determinant -or an integer power of it.
One uses a continuous map $\rho$ from the symplectic group $\Sp(\R^{2n-2})$ into $S^1$ and an ``admissible''
extension of  $\psi$ to a path $\widetilde{\psi} : [0,2] \rightarrow \Sp(\R^{2n-2})$  in such a way that $\rho^2\circ \widetilde{\psi}:[0,2]\rightarrow S^1$ is a loop. 
The Conley-Zehnder index of $\psi$ is defined as the degree of this loop 
\begin{equation*}
   \mu_{\textrm{CZ}}(\psi) :=\deg (\rho^2\circ \widetilde{\psi}).
\end{equation*}
Let $\phi^t$ denotes the flow of the Reeb vector field $R_\alpha$ on a starshaped hypersurface $\Sigma$ in $\R^{2n}$ endowed with the standard contact form $\alpha$. The linearized flow $T\phi^t$ respects the splitting $T\Sigma=\R R_\alpha\oplus\ker\alpha$, we have $T\phi^t|_{\ker\alpha}:\ker\alpha\to\ker\alpha$.
Throughout the paper we assume that all the periodic  orbits (including all iterates) are nondegenerate; this means that $1$ is not an eigenvalue of the linearized Poincar\'e return map $T\phi^T|_{\ker\alpha}(\gamma(0))$ 
of a periodic orbit $\gamma:[0,T]\to(\Sigma,\alpha)$ with $\gamma(0)=\gamma(T)$ and $\dot \gamma(t)=R_\alpha(\gamma(t))$. The Conley-Zehnder index
of a periodic orbit $\gamma$ is defined by
\[
	\mcz(\gamma):=\mcz(\psi_\gamma)
\]
where $\psi_\gamma(t)\in\Sp(\R^{2n-2})$, $t\in[0,1]$ is the linearized flow $T\phi^{Tt}|_{\ker\alpha}(\gamma(0))$ expressed in a symplectic trivialization of $\ker\alpha$ along $\gamma$ extendable over a capping disk of $\gamma$.
For a complete presentation of the Conley-Zehnder index we refer to \cite{CZ, SalamonZehnder, Sal,Lon02, AD,gutt2}.
The {\emph{mean Conley-Zehnder index}} of a periodic  orbit $\gamma$ is defined to be
\[
	\mmcz(\gamma) := \lim_{k\to\infty}\frac{\mcz(\gamma^k)}{k}.
\]

To begin with, we recall Long's index iteration formula in the nondegenerate case, and immediate consequences of this formula which are used in the proofs of our results;
the proof of this  theorem can be found in \cite[Section 8.3]{Lon02} or in \cite[Theorem 3.2]{K}.

\begin{theorem} (\cite{Lon02})\label{Longsiteration} 
	Given a nondegenerate periodic orbit $\gamma$, so that all its iterates are nondegenerate, there exist an integer $p\in\Z$, an integer $q\in[\,0\,,n-1\,]$ and $q$ irrational numbers $\theta_{j}$ in $\I$, such that, for any positive integer $\ell \in\N$, the Conley-Zehnder index of the $\ell$-th iterate of $\gamma$ is given by
\begin{equation}\label{iteration}
	\mcz(\gamma^{\ell})=\ell p+2\sum_{j=1}^{q}\floor{\ell \theta_{j}} + q
\end{equation}
where $\floor{r}$ denotes the largest integer which is lower or equal to $r$, and where  $q$ can be $n-1$ only when $p$ is even. In particular,
\begin{equation}\label{eq:mean}
	\mcz(\gamma)=p+q,\quad \mmcz(\gamma)=p+2\sum_{j=1}^{q}\theta_{j}, 
\end{equation}
and 
\begin{equation}\label{eq:iteration and mean index}
	\big|\mu_\CZ(\gamma^\ell)-\ell\,\wh\mu_\CZ(\gamma)\big|< n-1.
\end{equation}
Moreover if $\mcz(\gamma)\ge n-1+c$ for some $c\in\N\cup\{0\}$, then $p\ge c$ and $\mcz(\gamma^{\ell +1})\ge\mcz(\gamma^{\ell})+c$.
The Conley-Zehnder indices of all even (resp. odd) iterates of a periodic orbit have the same parity.
\end{theorem}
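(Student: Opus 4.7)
The plan is to reduce everything to a direct computation on the symplectic normal form of the linearized Poincaré return map $P_\gamma := T\phi^T|_{\ker\alpha}(\gamma(0)) \in \Sp(\R^{2(n-1)})$.

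First, I would invoke Long's symplectic normal form decomposition: up to symplectic conjugation, $P_\gamma$ splits as a symplectic direct sum of elementary blocks of three types, namely (positive or negative) hyperbolic blocks, elliptic rotation blocks with eigenvalues $e^{\pm 2\pi i \theta}$ on the unit circle, and loxodromic blocks with a quadruple of eigenvalues off both the real axis and the unit circle. Since the Conley--Zehnder index is additive under symplectic direct sum and the linearized flow respects the decomposition, it suffices to compute the contribution of each block to $\mcz(\gamma^\ell)$ iterate by iterate.

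Second, a block-by-block computation shows that each hyperbolic or loxodromic block contributes an integer linear in $\ell$, and each elliptic block with angle $2\pi\theta$ contributes $2\floor{\ell\theta}+1$ (with orientation-dependent sign for $\theta$). Nondegeneracy of every iterate $\gamma^\ell$ forces each angle $\theta$ to be irrational, since a rational $\theta=a/b$ would place $1$ in the spectrum of $P_\gamma^b$, contradicting the nondegeneracy of $\gamma^b$. After shifting each angle by an integer to normalize $\theta_j \in (0,1)$, summing over all blocks gives
\[
\mcz(\gamma^\ell)=\ell p+2\sum_{j=1}^q\floor{\ell\theta_j}+q,
\]
with $p \in \Z$ absorbing the non-elliptic contributions together with the shifts of the $\theta_j$, and $q$ counting the number of elliptic blocks. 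The bound $q \le n-1$ comes from each elliptic block being two-dimensional and $\dim\ker\alpha=2(n-1)$. When $q=n-1$ there are no non-elliptic blocks, so $p$ consists entirely of the even integer shifts absorbed from the elliptic angles, forcing $p$ to be even.

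Third, the remaining consequences are arithmetic. Evaluating the formula at $\ell=1$ gives $\mcz(\gamma)=p+q$ since $0<\theta_j<1$, and dividing by $\ell$ and taking the limit gives $\mmcz(\gamma)=p+2\sum_j\theta_j$. For the deviation bound,
\[
\mcz(\gamma^\ell)-\ell\mmcz(\gamma)= q-2\sum_{j=1}^q\{\ell\theta_j\},
\]
which lies in $(-q,q)\subseteq(-(n-1),n-1)$ by irrationality of the $\theta_j$, giving the strict inequality. For the monotonicity assertion, $\mcz(\gamma)=p+q\ge n-1+c$ together with $q\le n-1$ gives $p\ge c$, and then
\[
\mcz(\gamma^{\ell+1})-\mcz(\gamma^\ell)=p+2\sum_{j=1}^q\bigl(\floor{(\ell+1)\theta_j}-\floor{\ell\theta_j}\bigr)\ge p\ge c
\]
since the floor differences are non-negative. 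The parity statement is immediate from $\mcz(\gamma^\ell)\equiv \ell p+q\pmod 2$.

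The main obstacle is the first step: setting up Long's normal form canonically and computing the CZ contribution of each elementary block under iteration, including getting the sign conventions right for elliptic blocks and verifying the parity constraint in the case $q=n-1$. Once that is in place, everything else is bookkeeping with floor functions.
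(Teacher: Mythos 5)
The paper does not prove this theorem; it cites Long's book (Section 8.3) and \cite[Theorem 3.2]{K}, and the route you sketch---symplectic normal form of the return map, per-block index contributions, then floor-function bookkeeping---is exactly the strategy of those references. Your third step is fine: given \eqref{iteration} with $\theta_j$ irrational in $(0,1)$, the derivations of \eqref{eq:mean}, of the deviation bound \eqref{eq:iteration and mean index} (the deviation is $q-2\sum_j\{\ell\theta_j\}\in\,]-q,q[\,$, up to the harmless degenerate case $q=0$), of $p\ge c$ and $\mcz(\gamma^{\ell+1})\ge\mcz(\gamma^\ell)+c$, and of the parity statement from $\mcz(\gamma^\ell)\equiv \ell p+q \pmod 2$ are all correct.

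The genuine gap is in your first step, precisely where you say ``the linearized flow respects the decomposition.'' The splitting of $\R^{2(n-1)}$ into elementary blocks is a splitting into invariant subspaces of the endpoint matrix $P_\gamma=\psi_\gamma(1)$; the path $\psi_\gamma(t)$ for intermediate $t$ has no reason whatsoever to preserve these subspaces, so $\psi_\gamma$ (and a fortiori its iterates) is not a direct sum of block paths, and additivity of $\mcz$ under symplectic direct sums cannot be applied to it. The missing ingredient is the core of Long's proof: one must show that the entire sequence $\bigl(\mcz(\gamma^\ell)\bigr)_{\ell\ge 1}$ is unchanged under homotopies of the path that fix $\mcz(\gamma)$ and move the endpoint only within its connected component $\Omega^0(P_\gamma)$ of matrices with the same spectral data on the unit circle, and that every such path can be deformed in this way to a direct sum of standard elliptic, hyperbolic and loxodromic paths; only then do your block computations (which are themselves standard and correct, including the evenness of $p$ when $q=n-1$) say anything about $\gamma^\ell$. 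Equivalently one can bypass normal forms of paths by proving Bott's iteration formula for the $\omega$-indices and analyzing splitting numbers, as Long also does. As written, your argument computes the indices of a model path with the same endpoint as $\psi_\gamma$, not of $\psi_\gamma$ itself, and two paths with the same nondegenerate endpoint can have different Conley--Zehnder indices, so the step as stated would fail without the homotopy-invariance result.
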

An alternative way to see \eqref{eq:iteration and mean index} is presented in \cite[Lemma 3.4]{SalamonZehnder}.

The following theorem,  called the common index jump theorem due to Long and Zhu \cite[Theorem 4.3]{LZ}, is a key tool of the paper. We include a proof of the theorem stated below, because their idea in fact proves a slightly generalised statement which will be used  later in the paper. In the original proof, they used Bott's iteration formula and included the degenerate case; here we treat  the nondegenerate case which is simple enough for a proof only using Long's iteration formula.

\begin{theorem}(\cite{LZ})\label{thmCIJ+}
	Let $\gamma_1,\ldots,\gamma_k$ be  simple periodic  orbits on a given contact manifold of dimension $2n-1$.
	Assume that all the iterates of the periodic orbits are nondegenerate and that all the mean indices of the periodic orbits are positive; $\mmcz(\gamma_i)>0$ for all $i\in[\,0\,,k\,]$.
	Then, for any given $M\in\N$,  there exist infinitely many $N\in\N$ and $(m_1,\ldots,m_k)\in\N^{k}$ such that for any $m\in\{1,\ldots,M\}$
	\[
		\mcz\bigl(\gamma_i^{2m_i-m}\bigr)=2N-\mcz(\gamma_i^m)\quad\textrm{ and }\quad\mcz\bigl(\gamma_i^{2m_i+m}\bigr)=2N+\mcz(\gamma_i^m)
	\]
	and
	\[
		2N-(n-1)\leq\mcz(\gamma_i^{2m_i})\leq 2N+(n-1).
	\]
\end{theorem}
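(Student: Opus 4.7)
The plan is to reduce the claim, via Long's iteration formula (Theorem \ref{Longsiteration}), to a pair of floor-function identities, and then invoke simultaneous Diophantine approximation to force a common $N$. For each simple orbit $\gamma_i$, Theorem \ref{Longsiteration} yields integers $p_i$ and $q_i\in\{0,\dots,n-1\}$ and irrationals $\theta_{i,j}\in(0,1)$ with $\mcz(\gamma_i^\ell)=\ell p_i+2\sum_j\floor{\ell\theta_{i,j}}+q_i$ and $\mmcz(\gamma_i)=p_i+2\sum_j\theta_{i,j}>0$. A direct substitution of this formula shows that the desired identities are equivalent, after cancelling $q_i$'s, to identities between the floors $\floor{(2m_i\pm l)\theta_{i,j}}$ and the expressions $\floor{2m_i\theta_{i,j}}\pm\floor{l\theta_{i,j}}$.

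The first technical step is the following elementary lemma. Set
\[
\tau_{i,j}:=\min_{l=1,\dots,M}\min\bigl(\{l\theta_{i,j}\},\,1-\{l\theta_{i,j}\}\bigr),
\]
which is strictly positive by irrationality of $\theta_{i,j}$. If $\{2m_i\theta_{i,j}\}<\tau_{i,j}$ for every $j$, then for every $l\in\{1,\dots,M\}$,
\[
\floor{(2m_i+l)\theta_{i,j}}=\floor{2m_i\theta_{i,j}}+\floor{l\theta_{i,j}},\quad \floor{(2m_i-l)\theta_{i,j}}=\floor{2m_i\theta_{i,j}}-\floor{l\theta_{i,j}}-1,
\]
which follows by splitting $(2m_i\pm l)\theta_{i,j}$ into integer and fractional parts and using that $\{2m_i\theta_{i,j}\}<\{l\theta_{i,j}\}$ and $\{2m_i\theta_{i,j}\}+\{l\theta_{i,j}\}<1$. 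Substituting these back, both $\mcz(\gamma_i^{2m_i+l})-\mcz(\gamma_i^l)$ and $\mcz(\gamma_i^{2m_i-l})+\mcz(\gamma_i^l)$ collapse to the common integer $2N_i$ with
\[
N_i:=m_ip_i+\sum_j\floor{2m_i\theta_{i,j}}=m_i\mmcz(\gamma_i)-\sum_j\{2m_i\theta_{i,j}\},
\]
independently of $l$. Since $\mcz(\gamma_i^{2m_i})=2N_i+q_i$, the bound $2N-(n-1)\leq\mcz(\gamma_i^{2m_i})\leq 2N+(n-1)$ follows from $q_i\in[0,n-1]$ as soon as $N_i=N$.

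The decisive step is to choose $m_1,\dots,m_k\in\N$ so that the smallness condition on $\{2m_i\theta_{i,j}\}$ holds and all the $N_i$ coincide with a common integer $N$. Writing $\chi_i:=1/\mmcz(\gamma_i)>0$, the equality $N_i=N$ combined with the smallness condition forces $m_i\mmcz(\gamma_i)\in[N,N+1)$, i.e.\ $m_i=\lceil N\chi_i\rceil$ with $1-\{N\chi_i\}<\chi_i$. I would apply simultaneous Dirichlet/Kronecker approximation to the tuple $(\chi_i,\,2\chi_i\theta_{i,j})_{i,j}\in\R^{k+\sum_iq_i}$ to produce, for any prescribed $\delta>0$, infinitely many $N\in\N$ with $\{N\chi_i\}\in(1-\delta,1)$ and $\{2N\chi_i\theta_{i,j}\}\in[0,\delta)$. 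Then $m_i:=\lceil N\chi_i\rceil$ satisfies $m_i\mmcz(\gamma_i)-N=(1-\{N\chi_i\})\mmcz(\gamma_i)\in(0,\delta\mmcz(\gamma_i))$, and the expansion $2m_i\theta_{i,j}=2N\chi_i\theta_{i,j}+2(1-\{N\chi_i\})\theta_{i,j}$ ensures $\{2m_i\theta_{i,j}\}<\tau_{i,j}$ once $\delta$ is small enough, closing the argument.

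The main obstacle is the one-sided nature of this Diophantine step: standard Dirichlet only provides the two-sided proximity $\|N\alpha\|<\delta$, whereas here we need $\{N\chi_i\}$ to approach $1$ from below and $\{2N\chi_i\theta_{i,j}\}$ to approach $0$ from above. I would handle this by studying the closed subgroup $G$ of the ambient torus $T^{k+\sum_iq_i}$ generated by the tuple $(\chi_i,\,2\chi_i\theta_{i,j})$. Since $G$ is a closed subgroup containing $0$ and the $\N$-orbit is equidistributed in $G$, a pigeonhole over the $2^{k+\sum_iq_i}$ possible sign patterns of approach to $0$ within $G$, combined with the invariance of $G$ under multiplication by positive integers, produces infinitely many $N$ with the prescribed one-sided pattern.
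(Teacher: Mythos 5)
Your overall strategy---reduce to floor identities via Long's iteration formula and then apply a torus-equidistribution argument---is the same as the paper's, and the derivation of $N_i = m_i p_i + \sum_j\lfloor 2m_i\theta_{i,j}\rfloor = m_i\mmcz(\gamma_i) - \sum_j\{2m_i\theta_{i,j}\}$ under the hypothesis $\{2m_i\theta_{i,j}\} < \tau_{i,j}$ is correct. The gap is in the Diophantine step. You require infinitely many $N$ such that all $\{N\chi_i\}$ approach $1$ from below while all $\{2N\chi_i\theta_{i,j}\}$ approach $0$ from above, where $\chi_i=1/\mmcz(\gamma_i)$. This specific sign pattern is not always available: the closure $G$ of the $\N$-orbit of your tuple is contained in the subtorus cut out by the integer relation $p_i\chi_i + \sum_j(2\chi_i\theta_{i,j}) = \mmcz(\gamma_i)\chi_i = 1 \equiv 0 \pmod{1}$, so small displacements $(\delta x_i,\delta y_{i,j})$ from the identity inside $G$ satisfy $p_i\,\delta x_i + \sum_j\delta y_{i,j}=0$. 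Asking for $\delta x_i<0$ and all $\delta y_{i,j}>0$ forces $\sum_j\delta y_{i,j} = p_i|\delta x_i|$, which is impossible whenever $p_i\leq 0$ and $q_i\geq 1$---a situation fully consistent with the hypothesis $\mmcz(\gamma_i)>0$. A concrete failure: $k=1$, $q_1=1$, $p_1=-1$, $\theta_{1,1}\in(\tfrac12,1)$ irrational gives $\mmcz(\gamma_1)=2\theta_{1,1}-1>0$ and $2\chi_1\theta_{1,1}=\chi_1+1\equiv\chi_1\pmod{1}$, so $\{2N\chi_1\theta_{1,1}\}=\{N\chi_1\}$ and your two one-sided requirements are mutually exclusive for $\delta<\tfrac12$.

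Neither of your two proposed remedies addresses this. The pigeonhole over the $2^{k+\sum q_i}$ sign patterns only shows that \emph{some} pattern recurs infinitely often, not the particular one you need; and multiplying $G$ by a positive integer preserves the cone of approach directions at the identity, so it cannot create a sign pattern that the linear relations on $G$ exclude. The paper's proof avoids the obstruction precisely by not insisting on one-sidedness: it asks only that the relevant fractional parts lie in $[\,0,\epsilon\,)\cup(\,1-\epsilon,1\,)$, chooses $m_i$ and a sign $\eta_i$ according to which side $\{N\chi_i\}$ falls on, records in a set $\EE_i^c$ the indices $j$ with $\{2m_i\theta_{i,j}\}$ near $1$, and then verifies that the resulting $\pm\#\EE_i^c$ corrections in the shifted floor identities cancel in the final computation whatever the sign pattern turns out to be. To repair your argument you would likewise have to carry a set analogous to $\EE_i^c$ through the computation rather than arranging for it to be empty.
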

\begin{proof}
	Let $v$ be the vector in $\R^{k+\sum_{i=1}^{k}q_i}$ defined by
	\[
		v:=\biggl(\frac{1}{\mmcz(\gamma_1)},\ldots,\frac{1}{\mmcz(\gamma_k)},\frac{\theta_{1,1}}{\mmcz(\gamma_1)},\ldots,\frac{\theta_{1,q_1}}{\mmcz(\gamma_1)},\frac{\theta_{2,1}}{\mmcz(\gamma_2)},\ldots,
		\frac{\theta_{k,q_k}}{\mmcz(\gamma_k)}\biggr).
	\]
where $\mcz(\gamma_i^\ell)=\ell p_i+2\sum_{j=1}^{q_i}\floor{\ell \theta_{i,j}} + q_i$.
	Consider the closure of the projection on the torus $T^{k+\sum_{i=1}^{k}q_i}=\R^{k+\sum_{i=1}^{k}q_i}/\raisebox{-1ex}{$\Z^{k+\sum_{i=1}^{k}q_i}$}$ of  the set $\{k'v\}_{k'\in\N}$; it  is a closed subgroup of the torus $T^{k+\sum_{i=1}^{k}q_i}$. Hence  any neighbourhood of the neutral element of the torus contains the image of an infinite number of elements of the set $\{k'v\}_{k'\in\N}$.  
Hence, if we denote by $[a]$ the non integer part of $a$, $[a]:=a-\floor{a}$, for any given $\epsilon>0$,  there exist infinitely many $N\in\N$ such that all
	\[
		\bigg[\frac{N\theta_{i,j}}{\mmcz(\gamma_i)}\biggr] \textrm{ and } \bigg[\frac{N}{\mmcz(\gamma_i)}\biggr] \textrm{ are in } [\,0\,,\epsilon\,[   \textrm{ or in  }  ]\,1-\epsilon\, , 1\,[.
	\]
With $N$ as above, if  $\Big[\frac{N}{\mmcz(\gamma_i)}\Big] \textrm{ is in } [\,0\,,\epsilon\,[ $ define  $m_i := \floor*{\frac{N}{\mmcz(\gamma_i)}}$ and $\eta_i=1$. Then
\[
		[2m_i\theta_{i,j}] = \Biggl[\floor*{\frac{N}{\mmcz(\gamma_i)}}2\theta_{i,j}\Biggr] = \Biggl[\frac{2N\theta_{i,j}}{\mmcz(\gamma_i)}-\bigg[\frac{N}{\mmcz(\gamma_i)}\biggr]2\theta_{i,j}\Biggr]
\]
	 lies in $[\,0\,,4\epsilon\,[\,\,\cup\,\,]\,1-4\epsilon\,,1\,[$.  If  $\bigg[\frac{N}{\mmcz(\gamma_i)}\biggr] \textrm{ is in } ]\,1-\epsilon\, , 1\,[ $, define  $m_i := -\floor*{\frac{-N}{\mmcz(\gamma_i)}}$ and $\eta_i=-1$ . Then
\[
		[2m_i\theta_{i,j}] = \Biggl[-\floor*{\frac{-N}{\mmcz(\gamma_i)}}2\theta_{i,j}\Biggr] = \Biggl[\frac{2N\theta_{i,j}}{\mmcz(\gamma_i)}+\bigg[\frac{-N}{\mmcz(\gamma_i)}\biggr]2\theta_{i,j}\Biggr]
\]
	 lies in $[\,0\,,4\epsilon\,[\,\,\cup\,\,]\,1-4\epsilon\,,1\,[$. Observe that $\Big[\frac{-N}{\mmcz(\gamma_i)}\Big] \textrm{ is in } [\,0\,,\epsilon\,[ $.
	 Hence, with our definitions, we always have
\begin{equation}\label{eq:defmietai}
	 \Big[\frac{\eta_i N}{\mmcz(\gamma_i)}\Big] \in  [\,0\,,\epsilon\,[ , \qquad m_i := \eta_i\floor*{\frac{\eta_i N}{\mmcz(\gamma_i)}}, \quad  \textrm { and }  \, [2m_i\theta_{i,j}]\in [\,0\,,4\epsilon\,[\,\,\cup\,\,]\,1-4\epsilon\,,1\,[ .
\end{equation}	 
	 For each $i\in\{1,\ldots,m\}$, we denote by $\mathcal{E}_i$ the set
	\[
		\mathcal{E}_i := \bigl\{j\in\{1,\ldots,q_i\}\,\big|\, [2m_i\theta_{i,j}] \in [\,0\,,4\epsilon\,[ \bigr\}
	\]
	and by $\mathcal{E}_i^c$ its complementary ($\mathcal{E}_i^c := \{1,\ldots,q_i\}\setminus \mathcal{E}_i$).
	
	Given a positive  integer $M$ we pick the $\epsilon$ such that
	$$
	4 \epsilon <  \min\bigl\{ \theta_{i,j}\,, \left[2\theta_{i,j}\right],\ldots, \left[M\theta_{i,j}\right]\, ,1-\theta_{i,j}\,, \left[1-2\theta_{i,j}\right]\,,\ldots, \left[1-M\theta_{i,j})\right]\,,\tfrac{1}{6q_i},\tfrac{1}{\mmcz(\gamma_i)}\,\bigr|\, \forall i,j \bigr\}.
	$$
	For any $N$ corresponding as above to this $\epsilon$ and with the corresponding $m_i$, we have, 
	\[ 
	\bigl[2m_i\theta_{i,j}\bigr] < 4\epsilon\quad \forall j\in\mathcal{E}_i \qquad \textrm{ and }\qquad  1- \bigl[2m_i\theta_{i,j}\bigr] <4\epsilon\quad \forall j\in\mathcal{E}_i^c.
	\]
	Thus we have $\bigl[2m_i\theta_{i,j}\bigr]-\theta_{i,j}<4\epsilon-\theta_{i,j}<0$ and $\bigl[2m_i\theta_{i,j}\bigr]+\theta_{i,j}<1$ for all $ j\in\mathcal{E}_i$, and 
	$\bigl[2m_i\theta_{i,j}\bigr]-\theta_{i,j}>1-4\epsilon-\theta_{i,j}>0$ and $\bigl[2m_i\theta_{i,j}\bigr]+\theta_{i,j}>1-4\epsilon+\theta_{i,j}>1$ for all $ j\in\mathcal{E}^c_i$, so that
	\begin{equation}\label{eq:mitetai}
	\begin{array}{llll} 
	      \floor*{2m_i\theta_{i,j}}=\floor*{(2m_i-1)\theta_{i,j}}  &\textrm{ and  }&[2m_i\theta_{i,j}]=[(2m_i-1)\theta_{i,j}]+\theta_{i,j} &\textrm{ for } j\in\EE_i^c,\\[1ex]
		\floor*{2m_i\theta_{i,j}}=\floor*{(2m_i-1)\theta_{i,j}}+1&\textrm{ and  }&[2m_i\theta_{i,j}]=[(2m_i-1)\theta_{i,j}]+\theta_{i,j} -1   &\textrm{ for } j\in\EE_i,\\[1ex]
		\floor*{2m_i\theta_{i,j}}=\floor*{(2m_i+1)\theta_{i,j}} &\textrm{ and  }& [(2m_i+1)\theta_{i,j}]=[2m_i\theta_{i,j}]+\theta_{i,j}   &\textrm{ for } j\in\EE_i,\\[1ex]
		\floor*{2m_i\theta_{i,j}}=\floor*{(2m_i+1)\theta_{i,j}}-1 &\textrm{ and  }& [(2m_i+1)\theta_{i,j}]=[2m_i\theta_{i,j}]+\theta_{i,j}-1   &\textrm{ for } j\in\EE_i^c.
	\end{array}
\end{equation}

Equation \eqref{eq:mean} reads $\mmcz(\gamma_i)=p_i +2\sum_{j=1}^{q_i} \theta_{i,j}$ and yields:
	\begin{align*}
		2m_ip_i + 2\sum_{j=1}^{q_i}\floor*{2m_i\theta_{i,j}} &=2m_i\mmcz(\gamma_i)+2\sum_{j=1}^{q_i}\bigl(\floor*{2m_i\theta_{i,j}}-2m_i\theta_{i,j}\bigr)\\
		&=\eta_i \floor*{\frac{\eta_i N}{\mmcz(\gamma_i)}}2\mmcz(\gamma_i)-2\sum_{j=1}^{q_i}\bigl[2m_i\theta_{i,j}\bigr]\\
		&= 2N- \eta_i\biggl[\frac{\eta_i N}{\mmcz(\gamma_i)}\biggr]2\mmcz(\gamma_i)-2\sum_{j=1}^{q_i}\bigl[2m_i\theta_{i,j}\bigr];
	\end{align*}
with our choices of $\epsilon$, $N$  $m_i$'s and $\eta_i$'s, using \eqref{eq:defmietai} and the fact that the $\mmcz(\gamma_i)$'s are positive, we have
	\begin{align*}
		\abs{2m_ip_i+2\sum_{j=1}^{q_i}\floor*{2m_i\theta_{i,j}}-2N+2\#\mathcal{E}_i^c} &\leq 2\sum_{j\in\mathcal{E}_i^c}\Bigl(1-\bigl[2m_i\theta_{i,j}\bigr]\Bigr)\\
			&\quad+2\sum_{j\in\mathcal{E}_i}\Bigl(\bigl[2m_i\theta_{i,j}\bigr]\Bigr) + \biggl[\frac{\eta_i N}{\mmcz(\gamma_i)}\biggr]2\mmcz(\gamma_i)\\
		& <8\epsilon\#\mathcal{E}_i^c +8\epsilon\#\mathcal{E}_i+ 2 \epsilon \mmcz(\gamma_i)= 8q^i \epsilon +2\mmcz(\gamma_i){\epsilon}<1.
	\end{align*}
Since the difference of two integers is still an integer, this in turn implies 
	\begin{equation}\label{eq:NmathcalE}
		2m_ip_i+2\sum_{j=1}^{q_i}\floor*{2m_i\theta_{i,j}} = 2N-2\#\mathcal{E}_i^c.
	\end{equation}
	Equation \eqref{iteration} gives $\mcz(\gamma_i^{2m_i})=2m_ip_i+2\sum_{j=1}^{q_i} \floor{2m_i\theta_{i,j}}+q_i$; hence 
	\[
		\mcz(\gamma_i^{2m_i})=2N-2\#\mathcal{E}_i^c+q_i\in [\,2N-(n-1)\,,\,2N+(n-1)\,]
	\]
	and this proves the last part of the statement.
	We now  compute $\mcz(\gamma_i^{2m_i\pm1})$, using  equation \eqref{iteration} and relations \eqref{eq:mitetai} and \eqref{eq:NmathcalE} :
	\begin{align*}
		\mcz(\gamma_i^{2m_i-1}) &=2m_ip_i -p_i+2\sum_{j=1}^{q_i} \floor{(2m_i-1)\theta_{i,j}}+q_i\\
		&=2N+2\sum_{j=1}^{q_i}\Bigl(\floor*{(2m_i-1)\theta_{i,j}}-\floor*{2m_i\theta_{i,j}}\Bigr)-p_i+q_i-2\#\mathcal{E}_i^c\\
		&=2N+2\sum_{j\in\mathcal{E}_i}(-1)-p_i+q_i-2\#\mathcal{E}_i^c
		=2N-p_i-q_i =2N-\mcz(\gamma_i)
	\end{align*}
and
	\begin{align*}
		\mcz(\gamma_i^{2m_i+1}) &=2m_ip_i +p_i+2\sum_{j=1}^{q_i} \floor{(2m_i+1)\theta_{i,j}}+q_i\\
		&=2N+2\sum_{j=1}^{q_i}\Bigl(\floor*{(2m_i+1)\theta_{i,j}}-\floor*{2m_i\theta_{i,j}}\Bigr)+p_i+q_i-2\#\mathcal{E}_i^c\\
		&=2N+2\sum_{j\in\mathcal{E}_i^c}1+p_i+q_i-2\#\mathcal{E}_i^c = 2N+\mcz(\gamma_i).
	\end{align*}
More generally,	for any positive integer $1\le m\le M$, we have 
	\begin{align*}
		\mcz(\gamma_i^{2m_i+m}) &=2m_ip_i +mp_i+2\sum_{j=1}^{q_i} \floor{(2m_i+m)\theta_{i,j}}+q_i\\
		&= 2N+2\sum_{j=1}^{q_i}\Bigl(\floor*{(2m_i+m)\theta_{i,j}} - \floor*{2m_i\theta_{i,j}}\Bigr)+mp_i+q_i-2\#\mathcal{E}_i^c\\
		&=2N+mp_i+2\sum_{j=1}^{q_i}\Bigl(\floor*{(2m_i+m)\theta_{i,j}} - \floor*{(2m_i+1)\theta_{i,j}}\Bigr)+q_i\\
		&=2N+mp_i+2\sum_{j=1}^{q_i}\Bigl(\floor*{\bigl[(2m_i+1)\theta_{i,j}\bigr]+(m-1)\theta_{i,j}}\Bigr)+q_i\\
		&=2N+mp_i+2\sum_{j=1}^{q_i}\floor*{m\theta_{i,j}}+q_i=2N+\mcz(\gamma_i^m)
	\end{align*} 
In the fourth equality we used the identity
$$
\floor*{a+b}=\floor*{a}+\floor*{[a]+b},\quad  \forall a,b\in\R \textrm{ hence }  \floor*{a+b}-\floor*{a}=\floor*{[a]+b}
$$
 { for}  $a=(2m_i+1)\theta_{i,j}$ and  $ b=(m-1)\theta_{i,j}$ .
For the last equality we compute that if $j\in\EE_i$, 
$$
\floor*{\bigl[(2m_i+1)\theta_{i,j}\bigr]+(m-1)\theta_{i,j}}=\floor*{\bigl[ 2m_i\theta_{i,j}\bigr]+m\theta_{i,j}}
=\floor*{m\theta_{i,j}}
$$
using \eqref{eq:mitetai} and the fact that $[m\theta_{i,j}]+[2m_i\theta_{i,j}]<[m\theta_{i,j}]+4\epsilon<1$. If $j\in\EE_i^c$,
$$
\floor*{\bigl[(2m_i+1)\theta_{i,j}\bigr]+(m-1)\theta_{i,j}}=\floor*{m\theta_{i,j}-1+[2m_i\theta_{i,j}]}=\floor*{m\theta_{i,j}}
$$
using again \eqref{eq:mitetai} and the fact that $1-[2m_i\theta_{i,j}]<4\epsilon<1-[m\theta_{i,j}]$.
The computation for $\mcz(\gamma_i^{2m_i-m})$ is analogous.
\end{proof}

%---------------------------------------------------------------------------------------------------------------------------
\subsection{Positive $S^1$-equivariant symplectic homology}\label{SHS1}
Symplectic homology is defined for a compact symplectic manifold with nondegenerate contact type boundary.
Very roughly, it is the semi-infinite dimensional Morse homology for the symplectic action functional defined on the contractible component of the free loop space of such symplectic manifolds.
In our situation, a nondegenerate starshaped hypersurface $\Sigma$ in $\R^{2n}$ is a contact type boundary of the compact region bounded by $\Sigma$.
The version of homology which we will use is the so called  positive $S^1$-equivariant symplectic homology for $(\Sigma,\alpha)\subset\R^{2n}$ with rational coefficients, denoted by $SH^{S^1,+}_*(\Sigma,\R^{2n};\Q)$.
The $S^1$-action we are referring to is the reparametrization action on the free loop space and by positive we mean that only periodic orbits of the Reeb vector field are taken into account.
Rather than giving a precise definition we recall some important properties of it.
For details we refer the reader to \cite{V,Seidel,BOjems,bo,BOind,gutt}.
We can think that the chain complex for $SH^{S^1,+}_*(\Sigma,\R^{2n};\Q)$ is built over {\em unparametrized} periodic orbits of $(\Sigma,\alpha)$ with  grading  given by the Conley-Zehnder index, in light of \cite{bo}, see also \cite{K,gutt}.
The differential is counting gradient flow trajectories of the action functional between periodic orbits modulo the $S^1$-action, which solve a certain elliptic PDE.
Moreover bad periodic orbits do not contribute to this  homology.
Recall that a periodic orbit $\gamma$ is called {\em good} if the parity of its Conley-Zehnder index  is the same as that of the underlying simple orbit and is called {\em bad} otherwise.
 
More precisely, for any large real number $K$, there exists an integer $N$, such that the $S^1$-equivariant symplectic homology $SH^{S^1,+}_*(\Sigma,\R^{2n};\Q)$, truncated at level $K$ for the action, and up to degree $N$, is the  limit of homologies which can be computed via a spectral sequence for which the complex of the first page up to degree $N$ is spanned by the good periodic  orbits  on the boundary $\partial \Sigma$ of period  at most $K$,  graded  by their Conley-Zehnder index, and with a differential $\partial$, so that the action $\mathcal{A}(\gamma):=\int_\gamma\alpha$ of a periodic orbit decreases along $\partial$ (see \cite{gutt}).

The following computation is by now well known.
\begin{theorem}\label{thm:computing}
	
	Let $\Sigma$ be a nondegenerate starshaped hypersurface in $\R^{2n}$. Then we have
\[
		SH_*^{S^1,+}(\Sigma,\R^{2n};\Q) \cong 
	\begin{cases}
		\,\Q &\textrm{if } *\in n-1+2\N_{\geq 1}\\[1ex]
		\,0 &\textrm{otherwise}.
	\end{cases}
\]

\end{theorem}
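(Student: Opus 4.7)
The plan is to reduce to a model case by deformation invariance, then apply the Bourgeois-Oancea long exact sequence relating $\SH^{S^1,+}$ to the full $S^1$-equivariant symplectic homology and to the $S^1$-equivariant homology of the filling pair.

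First I would recall that $\SH^{S^1,+}_*(\Sigma,\R^{2n};\Q)$ is invariant under deformations of $\Sigma$ through nondegenerate starshaped hypersurfaces, via continuation maps for monotone homotopies of the defining Hamiltonians (cf. \cite{BOjems, gutt}). In particular, the group depends only on the symplectic filling of $\Sigma$, which is the closed ball $B^{2n}$, so it suffices to do the computation once.

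Next I would invoke the Bourgeois-Oancea long exact sequence (see \cite{BOjems, gutt})
\[
  \cdots \to H^{S^1}_{*+n}(B^{2n}, S^{2n-1};\Q) \to \SH^{S^1}_*(B^{2n};\Q) \to \SH^{S^1,+}_*(B^{2n};\Q) \to H^{S^1}_{*+n-1}(B^{2n}, S^{2n-1};\Q) \to \cdots,
\]
together with the vanishing $\SH^{S^1}_*(B^{2n};\Q)=0$, which is the $S^1$-equivariant form of Viterbo's vanishing for subcritical Stein domains. The sequence then yields an isomorphism
\[
  \SH^{S^1,+}_*(\Sigma,\R^{2n};\Q) \;\cong\; H^{S^1}_{*+n-1}(B^{2n}, S^{2n-1};\Q).
\]
The right-hand side is a standard topological computation: $H^{S^1}_k(B^{2n}, S^{2n-1};\Q) \cong \Q$ for even $k \geq 2n$ and is $0$ otherwise. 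Shifting by $n-1$ produces exactly the stated result: $\SH^{S^1,+}_*(\Sigma,\R^{2n};\Q) \cong \Q$ for $* \in n-1 + 2\N_{\geq 1}$ and $0$ otherwise.

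The main obstacles are fixing the precise form and degree conventions of the Bourgeois-Oancea sequence, and verifying the vanishing $\SH^{S^1}_*(B^{2n};\Q) = 0$; both are standard in the literature but require careful referencing. A more hands-on alternative is to compute directly on an irrational ellipsoid $E(a_1,\ldots,a_n)$ with rationally independent semiaxes: use Long's iteration formula (Theorem \ref{Longsiteration}) to check that every $\mcz(\gamma_i^k)$ has the same parity as $n-1$, and invoke a Beatty-type counting argument to show that each integer of the form $n-1+2m$ with $m\geq 1$ is attained by exactly one pair $(i,k)$. Since the differential lowers degree by one while all generators have the same parity, it vanishes identically and the claim follows directly.
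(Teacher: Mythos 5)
Your proposal is correct and is essentially the standard argument: the paper itself gives no proof of Theorem \ref{thm:computing}, calling it ``by now well known'' and citing \cite{BOjems,bo,BOind,gutt}, where exactly your route appears (invariance under deformation of starshaped hypersurfaces, the Bourgeois--Oancea exact sequence together with the vanishing of $SH^{S^1}$ of the ball, or equivalently the direct computation on an irrational ellipsoid with all generators of parity $n-1$). So you have reconstructed the proof the paper delegates to the literature, including the correct degree shift giving $\Q$ precisely in degrees $n-1+2\N_{\geq 1}$.
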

It implies in particular that for each non negative integer $m$ there exists at least one good periodic orbit of Conley-Zehnder index $n+1+2m$.
It also implies that if there exists a good periodic orbit with Conley-Zehnder index equal to $n+2m$, then there must exist at least $1$ extra good periodic orbit of order $n+2m+1$ or $n+2m-1$. Remark that the hypersurface is perfect if and only if  for each integer $m\ge 0$ there is exactly one good periodic orbit with Conley Zehnder index $n+1+2m$ and there are no good periodic orbit of any other Conley Zehnder index.
%{\color{red}
%It is worth mentioning that a resonance relation between mean Conley-Zehnder indices of them follows from the above %computation. We assign to each simple periodic orbit $\gamma$ a constant $c_\gamma\in\{1,2\}$ in the following way. %If $\gamma^2$ is good, $c_\gamma=1$ and otherwise, $c_\gamma=2$.  If there are only finitely many simple periodic %orbits, the resonance relation reads
%\[
%	\sum\frac{(-1)^{\mu_\CZ(\gamma)}}{c_\gamma\,\wh\mu_\CZ(\gamma)}=\frac{1}{2}
%\]
%where the sum runs over all simple periodic orbits. This is often helpful to find multiple periodic orbits. 
%We refer to \cite{Vit89,GK,WHL,GGo} for references.}

%%%%%%%%%%%%%%%%%%%%%%%%%%%%%%%%%%%%%%%%%%%%%%%%%%%%%%%%%%%%%%%%%%%%%%%%%%%%%%%%%%%%%%%%%%%%%%%%%%%%%%%%%%%%%%%%%%%%%%%%%%%%%%%%%%%%%%%%%%%%%%%%
\section{Multiplicity of periodic orbits}\label{results}
We denote by $\mathcal{P}_{n+1}$ the set of periodic orbits on $(\Sigma,\alpha)$ whose Conley-Zehnder indices are congruent to $n+1$ modulo 2.

\begin{theorem}\label{thm:longmult}
	Let $(\Sigma,\alpha)$ be a nondegenerate starshaped hypersurface in $\R^{2n}$. Suppose that every simple periodic orbit in $\mathcal{P}_{n+1}$ has Conley-Zehnder index at least $n+1$.
	Then $(\Sigma,\alpha)$ possesses at least $n$ simple periodic orbits, all iterations of which are in $\mathcal{P}_{n+1}$.
\end{theorem}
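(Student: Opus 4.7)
My plan is to combine the common index jump theorem (Theorem \ref{thmCIJ+}) with the structure of positive $S^1$-equivariant symplectic homology (Theorem \ref{thm:computing}) in a counting argument. Let $\mathcal{O}$ denote the set of simple periodic orbits all of whose iterates lie in $\mathcal{P}_{n+1}$; the goal is to show $|\mathcal{O}| \geq n$. First I would use Long's iteration formula to classify simple orbits by the parities of the integers $p$ and $q$ attached to $\gamma$. A simple orbit $\gamma$ has at least one good iterate in $\mathcal{P}_{n+1}$ exactly when $\mcz(\gamma) \equiv n+1 \pmod 2$, and these split into two classes: Type A, where $p$ is even and $q \equiv n+1 \pmod 2$ (so all iterates of $\gamma$ are good and lie in $\mathcal{P}_{n+1}$, i.e.\ Type A coincides with $\mathcal{O}$), and Type B, where $p$ is odd and $q \equiv n \pmod 2$ (so only odd iterates are good and they lie in $\mathcal{P}_{n+1}$). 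In either class $\gamma \in \mathcal{P}_{n+1}$ as a simple orbit, so by hypothesis $\mcz(\gamma) \geq n+1$; the ``Moreover'' clause of Theorem \ref{Longsiteration} then yields $p \geq 2$, $\mmcz(\gamma) > 2$, and the monotonicity $\mcz(\gamma^{\ell+1}) \geq \mcz(\gamma^\ell) + 2$.

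Next I would apply Theorem \ref{thmCIJ+} (with $M$ chosen sufficiently large) to all simple orbits of positive mean index, producing $N$ and $m_i$ with $\mcz(\gamma_i^{2m_i}) \in W := [2N-(n-1),2N+(n-1)]$. For a Type A orbit, $\gamma_i^{2m_i}$ is good and of correct parity, and for every $j \geq 1$ the CIJ formula together with the monotonicity above gives $\mcz(\gamma_i^{2m_i + j}) \geq 2N + (n+1)$ and $\mcz(\gamma_i^{2m_i - j}) \leq 2N - (n+1)$, so no other iterate of $\gamma_i$ lies in $W$. For a Type B orbit, $\gamma_i^{2m_i}$ is bad (an even iterate of an orbit with odd $p$), and its nearest good iterates $\gamma_i^{2m_i \pm 1}$ sit at CZ index $2N \pm \mcz(\gamma_i)$, which is at distance at least $n+1$ from $2N$, hence outside $W$. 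Orbits with $\mcz(\gamma) \not\equiv n+1 \pmod 2$ contribute no good iterates of correct parity at all. The conclusion of this step is that the total number of good iterates of correct parity inside $W$ is equal to $|\mathcal{O}|$.

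Finally I would invoke Theorem \ref{thm:computing}. The integer degrees of parity $n+1 \pmod 2$ inside $W$ are exactly the $n$ degrees $2N-(n-1), 2N-(n-3), \ldots, 2N+(n-1)$, and each carries $\SH^{S^1,+} = \Q$. The weak inequality $\dim \CF_k \geq \dim \SH^{S^1,+}_k$ applied degree by degree and summed over these $n$ degrees gives at least $n$ good generators of correct parity in $W$; combined with the previous step this forces $|\mathcal{O}| \geq n$.

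The main technical obstacle I anticipate is applying the common index jump theorem rigorously when there may be infinitely many simple orbits of positive mean index, since Theorem \ref{thmCIJ+} takes a finite collection as input. I would address this via the action-filtered structure of the positive $S^1$-equivariant complex: for $N$ large only the simple orbits whose actions are bounded in terms of $N$ can have iterates landing in $W$, and by nondegeneracy of $\Sigma$ these form a finite subcollection to which CIJ is genuinely applied. Checking uniformly that Type A iterates far from $\gamma_i^{2m_i}$ escape $W$ is the only other loose end, and it is handled by the same monotonicity of the CZ index along iterates provided by Theorem \ref{Longsiteration}.
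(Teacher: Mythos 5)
Your proposal is correct and follows essentially the same route as the paper: apply the common index jump theorem to the relevant simple orbits, observe that inside the resulting window the only good generators of parity $n+1$ are the even iterates $\gamma_i^{2m_i}$ of orbits all of whose iterates stay in $\mathcal{P}_{n+1}$, and then read off $\geq n$ such orbits from the $n$ nonzero groups $\SH^{S^1,+}_*$ in that window. The paper is slightly leaner (it applies Theorem \ref{thmCIJ+} with $M=1$ and only to the finitely many simple orbits in $\mathcal{P}_{n+1}$, rather than to all orbits of positive mean index), while you make the Type~A/Type~B dichotomy and the finiteness point more explicit, but the key steps coincide.
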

\begin{proof}
	Knowing the positive $S^1$-equivariant symplectic homology from Theorem  \ref{thm:computing}, which has generators in all degrees which are congruent to $n+1$ modulo $2$, no iterate of a simple periodic orbit not in $\mathcal{P}_{n+1}$ can generate a nonzero homology class since if some iterate of this is in $\mathcal{P}_{n+1}$, it is a bad periodic orbit.
	We can assume without loss of generality that there are only a finite number of simple periodic orbits in $\mathcal{P}_{n+1}$, say $\gamma_1,\ldots,\gamma_k$. Periodic orbits with Conley-Zehnder indices at least 
	$n+1$ have positive mean indices (cf equation \eqref{eq:iteration and mean index}) and thus
	by Theorem \ref{thmCIJ+}, with $M=1$, there exists an interval 
	$$]\,2N-(n+1)\,,2N+(n+1)\,[$$ for some $N\in\N$,  in which the Conley-Zehnder index of precisely one iterate of each of those orbits sits.
	Indeed, we have, with the notations of that theorem, $$\mcz\bigl(\gamma_i^{2m_i-1}\bigr)=2N-\mcz(\gamma_i)\le2N-(n+1),$$
	$$\mcz\bigl(\gamma_i^{2m_i+1}\bigr)=2N+\mcz(\gamma_i)\ge 2N+(n+1)$$ and, by  Long's iteration formula (Theorem \ref{Longsiteration})
	$\mcz\bigl(\gamma_i^{k}\bigr)<\mcz\bigl(\gamma_i^{k+1}\bigr)$ for all $k\in\N$.
	In view of Theorem \ref{thm:computing} again, there must be generators in the $n$ degrees which correspond to the Conley-Zehnder indices in the interval  (i.e. indices
	$2N-(n-1), 2N-(n-3), \ldots, 2N+(n-3), 2N+n-1$). Since they can only correspond to $\gamma_1^{2m_1},\ldots, \gamma_k^{2m_k}$, all of them have to be good and $k\geq n$.
\end{proof}

This together with the following corollary prove Theorem \ref{thm of Long and Zhu}.

\begin{cor}\label{cor:cormult}
	If a nondegenerate dynamically convex starshaped hypersurface $(\Sigma,\alpha)$ in $\R^{2n}$ possesses precisely $n$ simple periodic  orbits, then all periodic orbits are in $\mathcal{P}_{n+1}$ and all Conley-Zehnder indices of periodic orbits are different.
\end{cor}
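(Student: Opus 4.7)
My plan is to deduce the corollary directly from Theorem \ref{thm:longmult} together with a parity argument for the positive $S^1$-equivariant symplectic homology.

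First I would invoke Theorem \ref{thm:longmult}: dynamical convexity gives $\mcz(\gamma)\geq n+1$ for every periodic orbit, which a fortiori implies the hypothesis that every simple orbit in $\mathcal{P}_{n+1}$ has Conley-Zehnder index at least $n+1$. That theorem then produces at least $n$ simple periodic orbits whose iterates all lie in $\mathcal{P}_{n+1}$. Since by assumption there are precisely $n$ simple orbits, every simple orbit -- and hence every iterated orbit -- lies in $\mathcal{P}_{n+1}$, which is the first assertion of the corollary.

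For the distinctness of indices I would argue in two pieces. Two iterates of a single simple orbit $\gamma$ automatically have distinct Conley-Zehnder indices: dynamical convexity gives $\mcz(\gamma)\geq n+1 = (n-1)+2$, corresponding to $c=2$ in the last part of Theorem \ref{Longsiteration}, hence $\mcz(\gamma^{\ell+1})\geq\mcz(\gamma^\ell)+2$ and the sequence is strictly increasing. For iterates of distinct simple orbits the key observation is that every (good) periodic orbit now has Conley-Zehnder index of one and the same parity, namely $n+1\bmod 2$. In the spectral sequence of Section \ref{SHS1} computing $SH^{S^1,+}_*(\Sigma,\R^{2n};\Q)$, every differential shifts the Conley-Zehnder grading by $-1$ and therefore changes parity, so the concentration of all generators in a single parity forces every such differential to vanish on every page. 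Consequently $\dim SH^{S^1,+}_k$ equals the number of periodic orbits of Conley-Zehnder index $k$. Combining this with Theorem \ref{thm:computing} we find exactly one periodic orbit in each degree $k\in n-1+2\N_{\geq 1}$ and none elsewhere, so all periodic orbits have pairwise distinct Conley-Zehnder indices.

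The only delicate point is the vanishing of the differentials on all pages of the spectral sequence, which boils down to the grading convention that each $d_r$ has total Conley-Zehnder degree $-1$ (this is the standard Bourgeois-Oancea-type convention cited in Section \ref{SHS1}); once that is granted, the parity concentration finishes the argument immediately. In other words, dynamical convexity with exactly $n$ simple orbits forces the contact form to be perfect in the sense of Corollary \ref{cor:perfect contact form}, from which the distinctness of indices is automatic.
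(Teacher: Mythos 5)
Your proof is correct and takes essentially the same route as the paper: the first assertion comes from Theorem \ref{thm:longmult}, and the distinctness of indices comes from the concentration of all (good) orbits in the single parity of $\mathcal{P}_{n+1}$ together with Theorem \ref{thm:computing} --- the paper phrases this as ``a repeated index would force a good orbit of opposite parity in an adjacent degree,'' which is the same observation as your ``all degree $-1$ differentials vanish, so the contact form is perfect.'' Your preliminary step using $\mcz(\gamma^{\ell+1})\geq\mcz(\gamma^{\ell})+2$ for iterates of a single orbit is harmless but redundant, since the parity argument already yields at most one orbit in each degree.
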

\begin{proof}
	The first assertion directly follows from the theorem. 
	If two periodic orbits have the same index $n-1 +2k$ with $k\in\N$, there would exist a good periodic orbit with index $n+2k$ or $n+2(k+1)$ by Theorem \ref{thm:computing} and this is not in $\mathcal{P}_{n+1}$.
\end{proof}

\begin{cor}\label{perfect}
	Suppose that a nondegenerate contact form $\alpha$ on a starshaped hypersurface $\Sigma$ in $\R^{2n}$ is perfect.
	Then there are precisely n even simple periodic orbits.
\end{cor}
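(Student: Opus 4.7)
The strategy is to sandwich the count of even simple periodic orbits between $n$ and $n$: the lower bound comes directly from Theorem \ref{thm:longmult}, while the upper bound follows from a pigeonhole argument that combines the common index jump Theorem \ref{thmCIJ+} with the perfect hypothesis.

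First I would verify the hypothesis of Theorem \ref{thm:longmult} for a perfect $\alpha$. Every simple periodic orbit is automatically good (its underlying simple orbit is itself, so parities coincide trivially), and by definition of perfect combined with Theorem \ref{thm:computing} the only degrees supporting good orbits are $n+1, n+3, n+5, \ldots$. Hence every simple orbit lies in $\mathcal{P}_{n+1}$ with Conley-Zehnder index at least $n+1$, and Theorem \ref{thm:longmult} yields at least $n$ simple orbits whose entire sequence of iterates remains in $\mathcal{P}_{n+1}$, i.e.\ at least $n$ even simple periodic orbits.

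For the reverse inequality I would argue by contradiction, assuming the existence of $k \geq n+1$ even simple orbits $\gamma_1, \ldots, \gamma_k$. Each has $\mcz(\gamma_i) \geq n+1$ and hence positive mean index by \eqref{eq:iteration and mean index}, so Theorem \ref{thmCIJ+} with $M=1$ furnishes an integer $N \in \N$ and iterates $\gamma_i^{2m_i}$ whose Conley-Zehnder indices all lie in the interval $[2N-(n-1),\, 2N+(n-1)]$. Because each $\gamma_i$ is even, the parity of $\mcz(\gamma_i^{2m_i})$ matches that of $\mcz(\gamma_i) \equiv n+1 \pmod 2$, so these $k$ indices occupy only the $n$ values $2N-(n-1), 2N-(n-3), \ldots, 2N+(n-1)$. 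The $\gamma_i^{2m_i}$ are pairwise distinct good periodic orbits (the geometric image of $\gamma_i^{2m_i}$ coincides with that of the simple orbit $\gamma_i$, and distinct simple orbits have distinct images), so the pigeonhole principle produces two distinct good generators sharing a Conley-Zehnder index, contradicting the perfect assumption that each such degree carries exactly one good generator of $SH^{S^1,+}_*(\Sigma,\R^{2n};\Q)$.

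The delicate step is the last one: one must genuinely verify both that each $\gamma_i^{2m_i}$ is good (which is precisely what the evenness of $\gamma_i$ guarantees, via the last sentence of Theorem \ref{Longsiteration}) and that iterates of pairwise distinct simple orbits remain pairwise distinct as periodic orbits, so that the pigeonhole conclusion contradicts perfectness rather than merely exhibiting a bad orbit with no chain-complex contribution.
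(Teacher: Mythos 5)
Your proof is correct and follows essentially the same route as the paper: the lower bound by checking the hypothesis of Theorem \ref{thm:longmult} (the paper phrases this as ``perfectness implies dynamical convexity''), and the upper bound by applying Theorem \ref{thmCIJ+} to $n+1$ putative even simple orbits and invoking pigeonhole against the $n$ degrees in $[2N-(n-1),2N+(n-1)]$ carrying one-dimensional homology. Your added remarks on why the $\gamma_i^{2m_i}$ are good and geometrically distinct are the right ones to make, though the statement that they are good is really immediate from the definition of an even orbit rather than from the last sentence of Theorem \ref{Longsiteration}.
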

\begin{proof}
	From Theorem \ref{thm:longmult}, we know that there are at least $n$ even simple periodic orbits since perfectness implies dynamical convexity. Indeed if there is a periodic orbit whose Conley-Zehnder index is less than $n+1$, perfectness is violated since $SH_{n+1}^{S^1,+}(\Sigma,\R^{2n};\Q)$ is the first nonzero homology group, see Theorem \ref{thm:computing}. Now we show that there are at most $n$ even simple periodic orbits, see also \cite[Corollary 1.6]{Gurel}.
	Assume by contradiction that there are more than $n$ even simple periodic orbits.
	We choose $n+1$ even simple periodic orbits and then apply Theorem \ref{thmCIJ+}.
	Then there are $n+1$ good periodic orbits with index sitting in $[\,2N-(n-1)\,,\, 2N+(n-1)\,]$.
	By Theorem \ref{thm:computing}, this contradicts the perfectness assumption.
\end{proof}

This proves Corollary \ref{cor:perfect contact form}. Next we provide a proof of Theorem \ref{thmintro:mult}.

\begin{theorem}\label{mult}
	Let $(\Sigma,\alpha)$ be a nondegenerate starshaped hypersurface in $\R^{2n}$ such that all periodic orbits have Conley-Zehnder index at least $n-1$.
	Then $(\Sigma,\alpha)$ possesses at least $n$ simple periodic orbits.
\end{theorem}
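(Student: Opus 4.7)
The plan is to adapt the proof of Theorem~\ref{thm:longmult} to the weaker hypothesis $\mu_{\CZ}\geq n-1$, using the closed interval $[2N-(n-1),2N+(n-1)]$ in place of the open interval $]2N-(n+1),2N+(n+1)[$. Since every iterate of every simple periodic orbit has Conley--Zehnder index at least $n-1$, estimate~\eqref{eq:iteration and mean index} yields $\wh\mu_{\CZ}(\gamma)>0$ for every simple orbit $\gamma$, so Theorem~\ref{thmCIJ+} applies. Assume for contradiction that there are only $k\leq n-1$ simple periodic orbits $\gamma_1,\ldots,\gamma_k$ and apply Theorem~\ref{thmCIJ+} to them with $M=1$: this produces arbitrarily large $N\in\N$ and positive integers $m_1,\ldots,m_k$ with $\mu_{\CZ}(\gamma_i^{2m_i})\in[2N-(n-1),2N+(n-1)]$ and $\mu_{\CZ}(\gamma_i^{2m_i\pm 1})=2N\pm\mu_{\CZ}(\gamma_i)$. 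By Theorem~\ref{thm:computing}, for $N$ large this interval contains exactly $n$ degrees of the same parity as $n-1$ at which $SH^{S^1,+}(\Sigma,\R^{2n};\Q)$ is one-dimensional; each of these $n$ degrees must be the Conley--Zehnder index of some good periodic orbit.

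I would then determine which iterates of the $\gamma_i$ can contribute to the interval. If $\mu_{\CZ}(\gamma_i)\geq n$, Theorem~\ref{Longsiteration} gives $p_i\geq 1$ and strictly increasing iterate indices, so $\gamma_i^{2m_i\pm j}$ for $|j|\geq 1$ has index outside $[2N-(n-1),2N+(n-1)]$ and only $\gamma_i^{2m_i}$ can contribute. If $\mu_{\CZ}(\gamma_i)=n-1$, the odd iterates $\gamma_i^{2m_i\pm 1}$ are always good and sit precisely on the boundary degrees $2N\pm(n-1)$, while iterates $\gamma_i^{2m_i\pm j}$ with $|j|\geq 2$ either exit the interval or again hit one of those same boundary degrees. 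In the easy subcase where every $\mu_{\CZ}(\gamma_i)\geq n$, the $n$ required degrees can be covered only by the $k$ iterates $\gamma_i^{2m_i}$, which is impossible for $k\leq n-1$, giving the contradiction at once.

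The remaining, delicate, subcase is when some $\gamma_{i_0}$ has $\mu_{\CZ}(\gamma_{i_0})=n-1$. Now the boundary degrees $2N\pm(n-1)$ are already supplied by $\gamma_{i_0}^{2m_{i_0}\pm 1}$, and the $n-2$ interior nonzero-homology degrees inside $]2N-(n-1),2N+(n-1)[$ still have to be covered by iterates $\gamma_i^{2m_i}$, whose index from the proof of Theorem~\ref{thmCIJ+} is $2N+q_i-2\#\EE_i^c$. The \emph{main obstacle} is to rule out that these $\leq k\leq n-1$ iterates cover all $n-2$ interior degrees. I would attack this by exploiting the fact that Theorem~\ref{thmCIJ+} provides infinitely many admissible $N$'s: the density of the projection of $(1/\wh\mu_{\CZ}(\gamma_i),\theta_{i,j}/\wh\mu_{\CZ}(\gamma_i))_{i,j}$ on the ambient torus allows the selection of $N$ for which the $\#\EE_i^c$'s take prescribed values compatible with the $\theta_{i,j}$'s. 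Combined with the list of admissible $(p_i,q_i)$ pairs from Theorem~\ref{Longsiteration}---in particular the fact that $q_i=n-1$ forces $p_i$ to be even---one can then arrange an $N$ for which the $\gamma_i^{2m_i}$'s are pushed onto a boundary or collapse to fewer than $n-2$ distinct interior values, leaving at least one interior nonzero-homology degree uncovered. This contradicts Theorem~\ref{thm:computing} and yields $k\geq n$.
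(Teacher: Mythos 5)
The easy subcase in your proposal (all orbits with index $\geq n$) is fine and is essentially the argument of Theorem~\ref{thm:longmult}. However, your resolution of the \emph{delicate} subcase has a genuine gap.

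Your proposed device — exploiting the density of the orbit of $v$ on the torus to choose $N$ so that the sizes $\#\EE_i^c$ take ``prescribed values'' or so that the indices $\mu_{\CZ}(\gamma_i^{2m_i})=2N+q_i-2\#\EE_i^c$ ``collapse to fewer values'' — is not something the common index jump mechanism can deliver. The closure of $\{k'v\}$ in the torus is a \emph{fixed} closed subgroup determined by the rational dependencies among $\wh\mu_{\CZ}(\gamma_i)$ and $\theta_{i,j}$; the only thing Theorem~\ref{thmCIJ+} guarantees is that infinitely many $N$'s bring each coordinate close to $0\pmod 1$, not that you can steer which coordinates lie near $0$ versus near $1$ nor force collisions among $\#\EE_i^c$. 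You would have to rule out, for a \emph{generic} hypersurface, precisely the configurations your argument needs to exclude, and you have no lever to do so. Moreover, once an orbit $\gamma_{i_0}$ has index exactly $n-1$, Theorem~\ref{Longsiteration} only gives $\mu_{\CZ}(\gamma_{i_0}^{\ell+1})\geq\mu_{\CZ}(\gamma_{i_0}^{\ell})$ (non-strict), so with $M=1$ you cannot even exclude further iterates $\gamma_{i_0}^{2m_{i_0}\pm m}$, $m\geq 2$, from the interval, and your inventory of contributing generators is incomplete.

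What your proposal is missing is the structural input the paper actually uses. If some orbit $\Gamma$ has $\mu_{\CZ}(\Gamma)=n-1$, then since $SH_{n-1}^{S^1,+}=0$ and $\Gamma$ is automatically a good cycle in the lowest available degree, there \emph{must} be a good orbit $\delta$ of index $n$; by parity $\delta$ cannot be an iterate of $\Gamma$ or of any $\gamma_i\in\PP_{n+1}$, so it is a genuinely new simple orbit. The paper then assumes $\gamma_1,\ldots,\gamma_{n-2},\delta$ are the only simple orbits, applies Theorem~\ref{thmCIJ+} with $M=2$ (not $M=1$), and splits on whether the even iterates of $\delta$ are good or bad. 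In the good case $\delta^{2m_\delta}$ yields an uncancelable class in a degree where $SH^{S^1,+}$ vanishes; in the bad case the constraint that $q=n-1$ forces $p$ even (Theorem~\ref{Longsiteration}) rules out an orbit of index $n+2$, and then the good odd iterate $\delta^{2m_\delta+1}$ of index $2N+n$ overcounts the homology. Both the forced existence of $\delta$ and the parity dichotomy on $\delta^{2m_\delta}$ are absent from your plan, and I do not see how the density-of-$N$ idea substitutes for them.
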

\begin{proof}
       We study the complex built with the good periodic orbits and see its compatibility with the positive $S^1$-equivariant symplectic homology computation in Theorem \ref{thm:computing}.
	Due to Theorem \ref{thm:longmult}, we may assume that there is a periodic orbit $\Gamma$ whose Conley-Zehnder index is $n-1$.
	Using the same argument as in Theorem \ref{thm:longmult},
	we know that there exist at least  $n-2$ geometrically distinct simple periodic orbits $\gamma_1,\ldots,\gamma_{n-2}$   for which all  iterates are in $\mathcal{P}_{n+1}$ :	 we assume  that the  only  simple periodic orbits in $\mathcal{P}_{n+1}$ are $\gamma_1,\ldots,\gamma_k$. Since periodic orbits with Conley-Zehnder indices at least 
	$n-1$ have positive mean indices (cf equation \eqref{eq:iteration and mean index}),
	by Theorem \ref{thmCIJ+}, with $M=1$, there exists an interval 
	$$]\,2N-n+1\,,2N+n-1\,[$$ for some $N\in\N$,  in which the Conley-Zehnder index of precisely one iterate of each of those orbits sits.
	Indeed, we have, with the notations of that theorem, $$\mcz\bigl(\gamma_i^{2m_i-1}\bigr)=2N-\mcz(\gamma_i)\le2N-n+1,$$
	$$\mcz\bigl(\gamma_i^{2m_i+1}\bigr)=2N+\mcz(\gamma_i)\ge 2N+n-1$$ and, by  Long's iteration formula (Theorem \ref{Longsiteration})
	$\mcz\bigl(\gamma_i^{k}\bigr)\le\mcz\bigl(\gamma_i^{k+1}\bigr)$ for all $k\in\N$.
	In view of Theorem \ref{thm:computing} again, there must be generators in the $n-2$ degrees which correspond to the Conley-Zehnder indices in the interval  (i.e. indices
	$2N-n+3, 2N-n+5, \ldots, 2N+n-5, 2N+n-3$). They can only correspond to $\gamma_1^{2m_1},\ldots, \gamma_k^{2m_k}$, all of them have to be good so $k\geq n-2$.

	We also know that $SH^{S^1,+}_{n-1}=0$; since we have a generator $\Gamma$ in the chain complex in that degree,   there must exist a good periodic orbit $\delta$ of index $n$.
	Observe that $\delta$ cannot be an iterate of $\Gamma$ or any of the $\gamma_i$'s  because of the parity of its index (it would be a bad orbit). 
		This shows that either we already have $n$ simple periodic orbits and there is nothing more to prove, or $\Gamma$ is one of the orbits $\gamma_i$'s, say $\Gamma= \gamma_1$. We  assume by contradiction that $\gamma_1,\ldots,\gamma_{n-2},\delta$ are the only simple periodic orbits. 
	We can also assume that $\gamma_1$ is the only periodic orbit of Conley-Zehnder index $n-1$.
	Indeed  another periodic orbit of index $n-1$  would imply the existence of a second orbit $\widetilde{\delta}$ of index $n$; it would be 
	 geometrically distinct from $\delta$ since $\mcz(\delta^{m+1})\ge\mcz(\delta^m)+1$ and we would have shown the existence of $n$ simple periodic orbits.

	Thus we can assume that $\mcz(\gamma_1^2)\ge n+1$ and $\mcz(\gamma_i)\ge n+1$ for all $i\in\{2,\ldots,n-2\}$. Hence by Theorem \ref{Longsiteration},
$$
\mcz(\delta^s)>\mcz(\delta),\quad \mcz(\gamma_i^s)>\mcz(\gamma_i)
$$
for all $i\in\{2,\dots,n-2\}$ and for all integers $s\geq 2$. Since all $\gamma_1,\ldots,\gamma_{n-2},\delta$ have positive mean Conley-Zehnder indices, we can apply Theorem \ref{thmCIJ+}.   Let $(N,m_1,\ldots,m_{n-2},m_{\delta})\in\N^n$ be given by Theorem \ref{thmCIJ+} for $M=2$.
		We have, for all integers $s\ge 2$ and for all $i\in\{2,\ldots,n-2\}$:
	\[
		\mcz(\gamma_1^{2m_1-s})\le \mcz(\gamma_1^{2m_1-2})=2N-\mcz(\gamma_1^2)\le 2N-n-1 < \mcz(\gamma_1^{2m_1-1})=2N-n+1
		\]
		\[\quad\textrm{and}\quad 2N+n-1=\mcz(\gamma_1^{2m_1+1}) < \mcz(\gamma_1^{2m_1+2})\le\mcz(\gamma_1^{2m_1+s}).
	\]
	\[
		\mcz(\gamma_i^{2m_i-s}) < \mcz(\gamma_i^{2m_i-1})\le 2N-n-1\quad\textrm{and}\quad 2N+n+1\le \mcz(\gamma_i^{2m_i+1}) < \mcz(\gamma_i^{2m_i+s}).
	\]
	\[
		\mcz(\delta^{2m_\delta-s}) < \mcz(\delta^{2m_\delta-1})= 2N-n\quad\textrm{and}\quad 2N+n=\mcz(\delta^{2m_\delta+1}) < \mcz(\delta^{2m_\delta+s}).
	\]
	Hence the only  periodic orbits whose Conley-Zehnder indices lie in $[\,2N-n\,,2N+n\,]$ are $\delta^{2m_\delta-1}$ with index $2N-n$, $\gamma_1^{2m_1-1}$ with index $2N-n+1$,
	the $n-1$ orbits $\gamma_i^{2m_i}, 1\le i\le n-1$, whose indices are in $]\,2N-n+1\,,2N+n-1\,[\cap\{n+1+2\N\}$, $\delta^{2m_\delta+1}$ with index $2N+n$, $\gamma_1^{2m_1+1}$ with index $2N+n-1$, and $\delta^{2m_{\delta}}$ with index in $[2N-(n-1),2N+(n-1)]$.
	We distinguish two cases, whether $\delta^{2m_{\delta}}$ is good or bad.\\[1ex]
	{\bf{Case 1 :}} The even iterates of $\delta$ are good.
	Then the index  of $\delta^{2m_{\delta}}$ sits in $[\,2N-n+2\,,2N+n-2\,]$ and the orbit generates a $1$-dimensional piece in the complex and also in the homology since $\gamma_1^{2m_1-1},\gamma_1^{2m_1},\dots\gamma_{n-2}^{2m_{n-2}},\gamma_1^{2m_1+1}$ have to generate all homology classes of $SH^{S^1,+}$ with degrees in $[2N-n+1,2N+n-1]$ and therefore $\delta^{2m_{\delta}}$ is a cycle and not a boundary.
	This contradicts the computation of $SH^{S^1,+}$ given in  Theorem \ref{thm:computing}.\\[1ex]
	{\bf{Case 2 :}} The even iterates of $\delta$ are bad.
	We claim that $\mcz(\delta^3)\geq n+3$.
	Indeed by Theorem \ref{Longsiteration}, $\mcz(\delta^3)\geq n+2$ and  $\mcz(\delta^3)\neq n+2$ since otherwise $\mcz(\delta)= p+q$ with $p=1$ and $q=n-1$ which contradicts the fact that $p$ must be even if $q=n-1$ (cfr Theorem \ref{index}).
	This implies in particular that there are no periodic orbits of index $n+2$, therefore there is only one periodic orbit of index $n+1$.
	By Theorem \ref{thmCIJ+}, we know that
	\[
		\#\big\{\mcz^{-1}(2N+n+1\big)\}=1 \quad\textrm{and}\quad\#\big\{\mcz^{-1}(2N+n-1)\big\}=1
	\]
	and they generate the nonzero homology classes of $SH^{S^1,+}$ in each degree.
	But $\delta^{2m_\delta+1}$ is a good orbit of index $2N+n$, thus generates a homology class which is a contradiction with the computation of $SH^{S^1,+}$ given in Theorem \ref{thm:computing}.
\end{proof}

%%%%%%%%%%%%%%%%%%%%%%%%%%%%%%%%%%%%%%%%%%%%%%%%%%%%%%%%%%%%%%%%%%%%%%%%%%%%%%%%%%%%%%%%%%%%%%%%%%%%%%%%%%%%%%%%%%%%%%%%%%%%%%%%%%%%%%%%%%%%%%%%
\section{Third periodic orbit}\label{third periodic orbit}

This section is devoted to the proof of Proposition \ref{thm:third periodic orbit}.
Let $(\Sigma,\alpha)$ be a nondegenerate starshaped hypersurface in $\R^{2n}$.
In the case $n=2$, if there are precisely two periodic orbits, it is known that there is the action-index resonance relation between them, i.e. \eqref{eq:action-index resonance relation} holds, see \cite{BCE,Gurel}.
Now we consider the cases when $n\geq 3$.
From Theorem \ref{thm:computing}, we need at least one simple periodic orbit $\gamma\in\PP_{n+1}$ to generate non-zero homology classes.
Theorem \ref{thmintro:mult} shows that  there are at least n simple periodic orbits if all their Conley Zehnder indices are at least $n-1$, so  we may assume that $\mcz(\gamma)\leq n-3$.
Theorem \ref{thm:computing} says that the cohomoly vanishes in any degree $\leq n-3$, so we know that there is another simple periodic orbit $\delta$ such that $\delta^\ell$ is good with $\mcz(\delta^\ell)\in\{\mcz(\gamma)-1,\mcz(\gamma)+1\}$ for some $\ell\in\N$. Note that if some  iterates of $\delta$ are in $P_{n+1}$, they are bad.
We assume for a contradiction that $\gamma$ and $\delta$ are the only simple periodic orbits.
Note that both periodic orbits have positive mean indices since otherwise we need an additional periodic orbit to meet the homology computation in Theorem \ref{thm:computing}, in view of \eqref{eq:iteration and mean index}. Indeed, if the mean index is not positive, the indices of all iterates are less than $n-1$.
Denoting as before by $\N$ the set of strictly positive integers, we also may assume that 
\begin{equation}\label{eq:iterations_of_gamma}
	\{\mcz(\gamma^k)\,|\,k\in\N\}= \min \{\mcz(\gamma^k)\,|\,k\in\N\}-2+2\N
\end{equation}
since otherwise, by   Theorem \ref{thmCIJ+}, there is an infinite number of $q$'s in  the set $n-1+2\N$ which do not belong to $\{\mcz(\gamma^k)\,|\,k\in\N\}$ and this immediately guarantees an additional periodic orbit.

%%%%%%%%%%%%%%%%%%%%%%%
\subsection{First case: $\frac{\A(\gamma)}{\mmcz(\gamma)}>\frac{\A(\delta)}{\mmcz(\delta)}$}

Since bad periodic orbits do not have any contribution to the homology $SH^{S^1,+}$, we  consider in this section the Conley-Zehnder index only defined on the set $\GG$ of good periodic orbits:
\[
	\mu_\CZ:\GG\to\Z.
\]
Observe from \eqref{eq:iteration and mean index} that $\mu_\CZ(\gamma^k)=r$ implies $k\mmcz(\gamma)\in]\,r-(n-1)\,,r+(n-1)\,[$ and thus
\[
	(r+n-1)\frac{\AA(\gamma)}{\mmcz(\gamma)} > \AA(\gamma^k)=k\AA(\gamma) > (r-n+1)\frac{\AA(\gamma)}{\mmcz(\gamma)}.
\]
Similarly $\mu_\CZ(\delta^\ell)=r\pm1$ implies $\ell\mmcz(\delta)\in]\,r-1-(n-1)\,,r+1+(n-1)\,[$ and 
\[
	(r-n)\frac{\AA(\delta)}{\mmcz(\delta)} < \AA(\delta^\ell)=\ell\AA(\delta) < (r+n)\frac{\AA(\delta)}{\mmcz(\delta)}.	
\]
Hence
\[
\AA(\gamma^k)>\AA(\delta^\ell)\quad\textrm{ when } \quad \frac{r-n+1}{r+n}\frac{\AA(\gamma)}{\mmcz(\gamma)} > \frac{\AA(\delta)}{\mmcz(\delta)}.
\]
Now, since  $\frac{\AA(\gamma)}{\mmcz(\gamma)}> \frac{\AA(\delta)}{\mmcz(\delta)}$
we choose $C>0$ so that for all $R\ge C$ one has  
	\[
		\frac{R-n+1}{R+n}\,\frac{\A(\gamma)}{\mmcz(\gamma)}\geq\frac{\A(\delta)}{\mmcz(\delta)}.
	\]
If $\kappa_0\in\N$ is such that such that $2\kappa_0+n+1\ge C$, then, for any $\kappa\geq \kappa_0$, whenever  $\mu_\CZ(\gamma^k)=2\kappa+n+1$ and $\mu_\CZ(\delta^\ell)\in\{\mcz(\gamma^k)-1,\mcz(\gamma^k)+1\}$ for some $k,\ell\in\N$ we have 
\begin{equation}\label{eq:action ineq2}
	\AA(\gamma^k)>\AA(\delta^\ell).
\end{equation}
Since $SH^{S^1,+}_*$ is $\Q$ for $*\in2\N+n-1$ and 0 for $*\in 2\Z+n$, all high good iterates of  $\delta$ must be killed by good iterates of $\gamma$ due to  \eqref{eq:action ineq2}. Since the action decreases along the differential  $\p$ the equation \eqref{eq:action ineq2} implies
\begin{equation}\label{eq:consecutive orbits}
	\#\mu_\CZ^{-1}(2\kappa+n)+1=\#\mu_\CZ^{-1}(2\kappa+n+1),\quad \kappa\geq\kappa_0
\end{equation}
and  
\begin{equation}\label{eq:vanishing of odd differentials}
	\p_{2\kappa+n+2}:SC^{S^1,+}_{2\kappa+n+2}\stackrel{0}{\pf} SC^{S^1,+}_{2\kappa+n+1},\quad \kappa\geq\kappa_0.
\end{equation}
where  $SC^{S^1,+}$ is the chain complex spanned by the (unparametrized) good periodic orbits of period at most $K>>0$ and $\p$ is the differential.
Since $\hat\mu_\CZ(\gamma)>0$ and $\hat\mu_\CZ(\delta)>0$, we can choose $M\in\N$ sufficiently large such that for any $k\geq M$,
\begin{equation}\label{eq1}
	\mu_\CZ(\gamma^k)>2\kappa_0+n+3+2(n-1) \quad \textrm{ and }\quad \mu_\CZ(\delta^k)>2\kappa_0+n+3+2(n-1) .
\end{equation}
According to Theorem \ref{thmCIJ+}, we can find $(N,m_\gamma,m_\delta)\in\N^3$ with $N \geq \kappa_0+n$ satisfying
\begin{equation}\label{eq2}
	\mu_\CZ(\gamma^{2m_\gamma-m})=2N-\mu_\CZ(\gamma^m),\quad \mu_\CZ(\gamma^{2m_\gamma+m})=2N+\mu_\CZ(\gamma^m),\quad 1\leq m\leq M
\end{equation}
and 
\begin{equation}\label{eq3}
	\mu_\CZ(\delta^{2m_\delta-m})=2N-\mu_\CZ(\delta^m),\quad \mu_\CZ(\delta^{2m_\delta+m})=2N+\mu_\CZ(\delta^m),\quad 1\leq m\leq M.
\end{equation}
Using \eqref{eq:iteration and mean index}, we have $\mcz(\gamma^{k+i})-\mcz(\gamma^{k})>-2(n-1)$ for any $k,i\in\N$ because
$\mcz(\gamma^{k+i})-\mcz(\gamma^{k})=\mu_\CZ(\gamma^{k+i})-(k+i)\,\wh\mu_\CZ(\gamma)+i\,\wh\mu_\CZ(\gamma)-\bigl(\mu_\CZ(\gamma^{k})-k\,\wh\mu_\CZ(\gamma)\bigr)$. In particular, for any $m'\geq M$ , $\mcz(\gamma^{2m_\gamma-m'})<\mcz(\gamma^{2m_\gamma-M})+2(n-1)$ and 
$\mcz(\gamma^{2m_\gamma+m'})>\mcz(\gamma^{2m_\gamma+M})-2(n-1)$. Equations \eqref{eq1} and \eqref{eq2}  yield that for any $m'\geq M$, 
	\begin{equation}\label{eq4}
		\mcz(\gamma^{2m_\gamma-m'})\leq 2N-n+1,\quad \mcz(\gamma^{2m_\gamma+m'})\geq 2N+2\kappa_0+n+3.
	\end{equation}
One could deduce a better estimate for $\mcz(\gamma^{2m_\gamma-m'})$ but the estimate mentioned is enough for the proof.
The same holds for $\delta$: for any $m'\geq M$, 
	\begin{equation}\label{eq5}
		\mcz(\delta^{2m_\delta-m'})\leq 2N-n+1,\quad \mcz(\delta^{2m_\delta+m'})\geq 2N+2\kappa_0+n+3.
	\end{equation}
From Theorem \ref{thmCIJ+}, we also know
	\begin{equation}\label{eq6}
		\mcz(\gamma^{2m_\gamma})\leq 2N+n-1,\quad 
		\mcz(\delta^{2m_\delta})\leq 2N+n-1.
	\end{equation}
Moreover, the fact that both $\mmcz(\gamma)$ and $\mmcz(\delta)$ are positive together with \eqref{eq:iteration and mean index} imply that for all $k\in\N$, $\mcz(\gamma^k)$ and $\mcz(\delta^k)$ are bigger than $-n+1$ and therefore
	\begin{equation}\label{eq8}
		\mcz(\gamma^{2m_\gamma-m})<2N+n-1,\quad \mcz(\delta^{2m_\delta-m})<2N+n-1.
	\end{equation}
for all $1\leq m\leq M$ due to \eqref{eq2} and \eqref{eq3}. From \eqref{eq4}, \eqref{eq5}, \eqref{eq6},  and  \eqref{eq8}, we deduce that if 
\[
\mcz(\gamma^k),\,\mcz(\delta^\ell)\in[2N+n,2N+2\kappa_0+n+2],
\]
then $k,\ell\in\N$ are of the form 
\[
k=2m_\gamma+m,\quad \ell=2m_\delta+\tilde{m} \quad\textrm{for some } 1\leq m, \tilde{m}\leq M.
\]
Hence $\mcz(\gamma^k) =2N+r$ with $ n\leq r\leq 2\kappa_0+n+2$  implies $k=2m_\gamma+m$ for some  $1\leq m \leq M$, hence $\mcz(\gamma^m)=r$. Reciprocally if $\mcz(\gamma^{k'})=r$ then $r\le M$ by \eqref{eq1} so that $\mcz(\gamma^{2m_\gamma+k'}) =2N+r$.
The same is true for the indices if the iterates of $\delta$. Hence
\[
	 \#\mu_\CZ^{-1}(r)=\#\mu_\CZ^{-1}(2N+r),\quad  n\leq r\leq 2\kappa_0+n+2.
\]
 Since $N>\kappa_0+n$ we use equation \eqref{eq:consecutive orbits} with  $r=n-1+2q$ and the above to obtain 
\begin{equation}\label{eq:number of good orbits}
	\#\mu_\CZ^{-1}(n-2+2q)+1=\#\mu_\CZ^{-1}(n-1+2q),\quad 1\le q\le \kappa_0+1.
\end{equation}
This implies that the differential $\p_{n}:SC_{n}^{S^1,+}\to SC_{n-1}^{S^1,+}$ vanishes.
Indeed if this were not true, the differential $\p_{*}$ would be nonzero for all $*=2q+n, q\le \kappa_0+1$  to obtain the homology results of Theorem \ref{thm:computing}, in view of \eqref{eq:number of good orbits}.
This would contradicts \eqref{eq:vanishing of odd differentials}.
This implies that 
	\[
		\big(SC_*^{S^1,+},\p_*\big)_{*\in I},\quad I=\Z\cap [-n+3,n-1]
	\] 
is a chain complex with zero homology in view of Theorem \ref{thm:computing} again. We claim that 
 this is impossible by showing that
	\[
		\sum_{q\in (2\Z+{n+1})\cap I}\#\mcz^{-1}(q)>\sum_{q\in (2\Z+{n})\cap I}\#\mcz^{-1}(q).
	\]
Observe from \eqref{eq4}  that  $\mcz(\gamma^k)=2N+q$ with $q \in [0,n-1]$ implies that $k=2m_\gamma+k_0$ with $-M\le k_0\le M$; and, 
by \eqref{eq2},  $q=\mcz(\gamma^{k_0})$ if $k_0>0$ and $q=-\mcz(\gamma^{\vert k_0 \vert})$ if $k_0<0$.
Conversely, if $\mcz(\gamma^{k_0})$ is $q$ (or $-q$) for some $k_0\in\N$, then $\mcz(\gamma^{2m_\gamma+k_0})$ (or $\mcz(\gamma^{2m_\gamma-k_0}))$ is $2N+q$. The same holds for $\delta$.
Hence
	\[
		\#\big(\mu_\CZ^{-1}(-q)\cup\mu_\CZ^{-1}(q)\big)=\#\mu_\CZ^{-1}(2N+q),\quad 0\leq q\leq n-1
	\]
except in the case where $\gamma^{2m_\gamma}$ or $\delta^{2m_\delta}$ is good and has index $2N+q$. Therefore we have
\begin{equation}\label{eq:e_gamma}
		e_\gamma+\sum_{q\in (2\Z+{n+1})\cap I}\#\mcz^{-1}(q)=\sum_{q\in (2\Z+{n+1})\cap [0,n-1]}\#\mcz^{-1}(2N+q)
\end{equation}
where $e_\gamma=1$ if $\gamma^{2m_\gamma}$ is good and otherwise $e_\gamma=0$. We set $e_\delta\in\{0,1\}$ in the same way and have
\begin{equation}\label{eq:e_delta}
		e_\delta+\sum_{q\in (2\Z+{n})\cap I}\#\mcz^{-1}(q)=\sum_{q\in (2\Z+{n})\cap [0,n-2]}\#\mcz^{-1}(2N+q)
\end{equation}
Since we have assumed that $n\ge 3$,  $\#((2\Z+n+1)\cap I)\geq 2$. Therefore using \eqref{eq:consecutive orbits}, \eqref{eq:e_gamma}, and \eqref{eq:e_delta} we deduce
\begin{eqnarray*}
\sum_{q\in (2\Z+{n+1})\cap I}\#\mcz^{-1}(q)&=&\sum_{q\in (2\Z+{n+1})\cap [0,n-1]}\#\mcz^{-1}(2N+q)-e_\gamma\\
&\geq&\sum_{q\in (2\Z+{n})\cap [0,n-2]}\#\mcz^{-1}(2N+q)+\#((2\Z+{n+1})\cap [0,n-1])-e_\gamma\\
&\geq& \sum_{q\in (2\Z+{n})\cap I}\#\mcz^{-1}(q)+2+e_\delta-e_\gamma>\sum_{q\in (2\Z+{n})\cap I}\#\mcz^{-1}(q).
\end{eqnarray*}
This proves the claim and hence the first case.

%%%%%%%%%%%%%%%%%%%%%%%%
\subsection{Second case: $\frac{\A(\gamma)}{\mmcz(\gamma)}<\frac{\A(\delta)}{\mmcz(\delta)}$}
We derive a contradiction in a similar manner to the first case. In the same way as before, the condition $\frac{\A(\gamma)}{\mmcz(\gamma)}<\frac{\A(\delta)}{\mmcz(\delta)}$ implies that  there is $\kappa_0\in\N$ such that  for any $\kappa\geq \kappa_0$, if $\mcz(\gamma^k)\geq2\kappa+n+1$ and  $\mu_\CZ(\delta^\ell)\in\{\mcz(\gamma^k)-1,\mcz(\gamma^k)+1\}$ for some $k,\ell\in\N$, then
\[
	\AA(\gamma^k)<\AA(\delta^\ell).
\]
As before, this implies that 
\begin{equation}\label{conseqcase2}
	\#\mu_\CZ^{-1}(2\kappa+n+1)=\#\mu_\CZ^{-1}(2\kappa+n+2)+1,\quad \kappa\geq\kappa_0
\end{equation}
and  
\begin{equation}\label{conseq2case2}
	\p_{2\kappa+n+1}:SC^{S^1,+}_{2\kappa+n+1}\stackrel{0}{\pf} SC^{S^1,+}_{2\kappa+n},\quad \kappa\geq\kappa_0.
\end{equation}
We choose $(N,m_\gamma,m_\delta)\in\N^3$ to satisfy \eqref{eq1} and \eqref{eq2} as in case one to obtain as before
\[
	 \#\mu_\CZ^{-1}(r)=\#\mu_\CZ^{-1}(2N+r),\quad  n\leq r\leq 2\kappa_0+n+2.
\]
and with  $N>\kappa_0+n$ we use equation \eqref{conseqcase2}    and the above to obtain  the counterpart of \eqref{eq:number of good orbits}
\begin{equation}\label{eq:number of good orbits2}
	\#\mu_\CZ^{-1}(n+2q)+1=\#\mu_\CZ^{-1}(n-1+2q),\quad 1\le q\le \kappa_0+1.
\end{equation}
This implies that the differential $\p_{n+1}:SC_{n+1}^{S^1,+}\to SC_{n}^{S^1,+}$ vanishes. Indeed if it did not vanish, then by induction, using
\eqref{eq:number of good orbits2}, all $\p_{n-1+2q}$ would not vanish for  $1\le q\le \kappa_0+1$  and this would contradict \eqref{conseq2case2}.
Therefore the chain complex
	\[
		\big(SC^{S^1,+}_*,\p_*)_{*\in I},\quad I'=\Z\cap[-n+3,n]
	\]
has vanishing homology. However this is impossible if $n$ is odd.  Indeed,  \eqref{eq:e_gamma}  and  \eqref{eq:e_delta} become, with the same notation,
\[
		e_\gamma+\sum_{q\in (2\Z+{n+1})\cap I'}\#\mcz^{-1}(q)=\sum_{q\in (2\Z+{n+1})\cap [0,n-1]}\#\mcz^{-1}(2N+q)
\]
\[
		e_\delta+\sum_{q\in (2\Z+{n})\cap I'}\#\mcz^{-1}(q)=\sum_{q\in (2\Z+{n})\cap [0,n]}\#\mcz^{-1}(2N+q)
\]
where $e_\gamma=1$ if $\gamma^{2m_\gamma}$ is good and otherwise $e_\gamma=0$ and similarly for  $e_\delta\in\{0,1\}$.
We now use \eqref{conseqcase2} and  get
\begin{eqnarray*}
\sum_{q\in (2\Z+{n+1})\cap I'}\#\mcz^{-1}(q)&=&\sum_{q\in (2\Z+{n+1})\cap [0,n-1]}\#\mcz^{-1}(2N+q)-e_\gamma\\
&=&\sum_{q\in (2\Z+{n})\cap [1,n]}\#\mcz^{-1}(2N+q)+\#((2\Z+{n+1})\cap [0,n-1])-e_\gamma\\
&\geq& \sum_{q\in (2\Z+{n})\cap I'}\#\mcz^{-1}(q)+2 -\alpha +e_\delta-e_\gamma>\sum_{q\in (2\Z+{n})\cap I'}\#\mcz^{-1}(q)-\alpha.
\end{eqnarray*}
Where $\alpha=\#\mcz^{-1}(0)$ if n is even and $\alpha=0$ if $n$ is odd.
This proves the second case when $n$ is odd,
and hence finishes the proof of Proposition \ref{thm:third periodic orbit}.

%%%%%%%%%%%%%%%%%%%%%%%%%%%%%%%%%%%%%%%%%%%%%%%%%%%%%%%%%%%%%%%%%%%%%%%%%%%%%%%%%%%%%%%%%%%%%%%%%%%%%%%%%%%%%%%%%%%%%%%%%%%%%%%%%%%%%%%%%%%%%%%%
\bibliographystyle{alpha}
%\nocite{*}

\bibliography{biblithese.bib}

%%%%%%%%%%%%%%%%%%%%%%%%%%%%%%%%%%%%%%%%%%%%%%%%%%%%%%%%%%%%%%%%%%%%%%%%%%%%%%%%%%%%%%%%%%%%%%%%%%%%%%%%%%%%%%%%%%%%%%%%%%%%%%%%%%%%%%%%%%%%%%%%
\end{document}